\documentclass[11pt]{article}

\usepackage[T1]{fontenc}
\usepackage{lmodern}
\usepackage[margin=1.05in]{geometry}
\usepackage{amsmath,amssymb,amsthm,mathtools,bm}
\usepackage{booktabs,tabularx,array}
\usepackage{enumitem}
\usepackage{microtype}
\usepackage{xcolor}
\usepackage[colorlinks=true,linkcolor=blue!55!black,citecolor=blue!55!black,urlcolor=blue!55!black]{hyperref}
\usepackage[nameinlink,noabbrev]{cleveref}
\hypersetup{
  pdftitle={Bulk-edge sticking beyond the Perron mode in Gaussian softmax attention},
  pdfauthor={Alexander Jerschow}
}

\allowdisplaybreaks

\newtheorem{theorem}{Theorem}[section]
\newtheorem{proposition}[theorem]{Proposition}
\newtheorem{lemma}[theorem]{Lemma}
\newtheorem{corollary}[theorem]{Corollary}
\theoremstyle{definition}

\theoremstyle{remark}
\newtheorem{remark}[theorem]{Remark}

\newcommand{\R}{\mathbb R}
\newcommand{\E}{\mathbb E}
\newcommand{\Pp}{\mathbb P}
\newcommand{\one}{\mathbf 1}
\newcommand{\op}{\mathrm{op}}

\newcommand{\Tr}{\operatorname{Tr}}
\newcommand{\diag}{\operatorname{diag}}
\newcommand{\supp}{\operatorname{supp}}
\newcommand{\MP}{\mathrm{MP}}

\newcommand{\norm}[1]{\lVert #1\rVert}
\newcommand{\abs}[1]{\lvert #1\rvert}
\newcommand{\Kbeta}{\mathcal K_\beta}
\newcommand{\ESD}{\operatorname{ESD}}

\title{Bulk-edge sticking beyond the Perron mode\\
in Gaussian softmax attention}
\author{Alexander Jerschow\\
\small Graduate School of Mathematics, Nagoya University}
\date{September 2026}

\begin{document}
\maketitle

\begin{abstract}
We study row-softmax self-attention with independent Gaussian query and key
weights in the proportional regime, at fixed inverse temperature.  Hayase,
Collins, and Karakida proved a Gaussian equivalence for the empirical squared
singular-value distribution after removal of the Perron direction.  A global
law alone does not exclude finitely many nonleading outliers.  We prove that
no such outliers persist: the rescaled squared singular value
$\ell s_k(A)^2$ converges in probability to the upper edge of their bulk law
for every fixed $k\geq2$.  In fact, this convergence is uniform over any
deterministic sublinear number of leading non-Perron indices.  The proof uses
an exact decomposition of the softmax normalization, conditions on the key
matrix, identifies the conditional covariance exactly with a diagonally
conjugated inner-product kernel, linearizes that kernel in operator norm, and
applies the outside-support local law of Fan, Ma, Paquette, and Wang.  A
separate stability argument identifies the finite conditional deformed
Marchenko--Pastur edge with the limiting bulk edge.  We also derive a scalar
formula for that edge throughout the proportional regime and recover the
explicit square-model formula of Hayase, Collins, and Karakida, including its
physical branch.
\end{abstract}

\medskip
\noindent\textbf{Keywords.}
Self-attention; softmax; random matrices; singular values; spectral edge;
random features; deformed Marchenko--Pastur law.

\section{Introduction}

Let $A$ be a random row-stochastic attention matrix.  Its deterministic right
eigenvector $\one$ identifies the Perron direction and yields one macroscopic
singular value.  The
remaining singular values are much smaller.  In the fixed-temperature
Gaussian model, Hayase, Collins, and Karakida (HCK) proved that the empirical
law of the squared singular values of
\(
  \sqrt\ell(A-u_\ell u_\ell^\top)
\)
converges to a compactly supported deterministic measure
$\nu_\infty$~\cite[Theorem~3.2]{HayaseCollinsKarakida2026}.  Their result
determines the bulk scale and bulk shape, but it does not determine the top
few non-Perron singular values: changing finitely many eigenvalues does not
change an empirical spectral distribution.

This paper studies the upper spectral edge in the Gaussian,
row-isometric-input model.  Here and below, ``non-Perron singular values''
means the nonleading singular values $s_k(A)$, $k\geq2$, lying beyond the
single macroscopic mode induced by the exact Perron eigenvector.  We prove
that all fixed such singular values stick to the upper bulk edge.  If
\(
  \ell/d\to\gamma\in(0,1]
\)
and
\(
  d_{qk}/d\to\psi\in(0,\infty)
\), then, for each fixed $k\geq2$,
\[
  \ell s_k(A)^2\xrightarrow{\Pp}
  E_+(\beta,\gamma/\psi):=\max\supp\nu_\infty(\beta,\gamma,\psi).
\]
Equivalently,
\(
  d s_k(A)^2\xrightarrow{\Pp}E_+/\gamma
\).

There are three points in the proof that do not follow from the global law.
First, the operator attached directly to the non-Perron singular values is a
double compression, not only the right-centered matrix used for the empirical
law.  Second, after conditioning on the keys, the nonlinear feature rows are
independent but have dependent coordinates; an independent-entry edge theorem
does not apply.  Third, the finite-dimensional, conditional deformed
Marchenko--Pastur edge must itself be shown to converge to the HCK edge.  We
handle these points respectively by Cauchy interlacing, the non-separable
sample-covariance local law of Fan--Ma--Paquette--Wang
\cite{FanMaPaquetteWang2026}, and a direct analysis of the Silverstein map.

The mechanism is also useful conceptually.  Conditioning produces two
low-rank nonlinear structures: a conditional-mean direction and a quadratic
kernel direction.  The left Perron projection removes the former exactly,
while the double compression removes the latter to leading order.
Related conditional-centering and quadratic-equivalent mechanisms occur in
the nonlinear random-feature literature
\cite{PenningtonWorah2017,FanWang2020,BenigniPeche2021,
BenigniPeche2022,CranstonWangKempMahoney2026}.  Here they are combined with
the exact row normalization and the singular-value geometry of attention.

\paragraph{Organization.}
\Cref{sec:model} states the model and main theorem.
\Cref{sec:edge-description} describes the limiting edge.
\Cref{sec:softmax,sec:normalizer} treat the exact normalization.
\Cref{sec:conditioning,sec:kernel} identify and approximate the conditional
covariance.  \Cref{sec:local-law,sec:finite-edge} supply the upper spectral
bound.  \Cref{sec:completion} combines it with the HCK bulk law and transfers
the result to every fixed $s_k(A)$.  \Cref{sec:square} specializes the answer
to the square model.  Appendix~\ref{app:numerical-edge} gives a numerical
evaluation of its edge with explicit error bounds.

\section{Model and main result}
\label{sec:model}

For each $d$, let
\[
  X=X_d\in\R^{\ell\times d},
  \qquad XX^\top=I_\ell,
  \qquad \ell\leq d,
\]
be deterministic.  Let
\(
  W^Q,W^K\in\R^{d\times p}
\)
be independent matrices with independent $\mathcal N(0,1)$ entries, where
$p=d_{qk}$.  Put
\begin{equation}
  Q=XW^Q,\qquad K=XW^K,\qquad
  S=\frac{QK^\top}{\sqrt p}\in\R^{\ell\times\ell}.
  \label{eq:model-scores}
\end{equation}
For a fixed $\beta>0$, define row-softmax attention by
\begin{equation}
  A_{ij}=\frac{e^{\beta S_{ij}}}
  {\sum_{r=1}^{\ell}e^{\beta S_{ir}}}.
  \label{eq:model-attention}
\end{equation}
We work in the proportional regime
\begin{equation}
  d,\ell,p\longrightarrow\infty,
  \qquad \frac\ell d\longrightarrow\gamma\in(0,1],
  \qquad \frac p d\longrightarrow\psi\in(0,\infty),
  \label{eq:proportional}
\end{equation}
and write
\begin{equation}
  \rho:=\lim\frac\ell p=\frac\gamma\psi\in(0,\infty).
  \label{eq:rho}
\end{equation}
Since $XX^\top=I_\ell$, both $Q$ and $K$ have independent standard Gaussian
entries.  Thus the law of the reduced model depends on $(\gamma,\psi)$ only
through $\rho$.  We retain HCK's three-parameter notation for
$\nu_\infty$, but write its edge as $E_+(\beta,\rho)$.

Let
\begin{equation}
  u=u_\ell:=\ell^{-1/2}\one,
  \qquad P:=I_\ell-uu^\top.
  \label{eq:u-P}
\end{equation}
For a square matrix $Y$, let
\[
  \nu_Y:=\frac1\ell\sum_{j=1}^{\ell}\delta_{s_j(Y)^2}
\]
denote its empirical squared singular-value law.  HCK prove that
\begin{equation}
  \nu_{\sqrt\ell(A-uu^\top)}
  \xrightarrow[\ell\to\infty]{\text{moments, a.s.}}
  \nu_\infty(\beta,\gamma,\psi),
  \label{eq:HCK-global-intro}
\end{equation}
where $\nu_\infty$ is deterministic and compactly supported
\cite[Theorem~3.2]{HayaseCollinsKarakida2026}.
In particular, the moment convergence implies weak convergence: compact
support makes the limiting moment problem determinate, and the uniform moment
bounds give tightness.

We use $X_\ell=o_{\Pp}(a_\ell)$ to mean
$X_\ell/a_\ell\to0$ in probability and $X_\ell=O_{\Pp}(a_\ell)$ to mean that
$X_\ell/a_\ell$ is bounded in probability (tight); for matrices, the notation
refers to the displayed norm.

\begin{theorem}[Bulk-edge sticking beyond the Perron mode]
\label{thm:main}
Under \eqref{eq:model-scores}--\eqref{eq:proportional}, for every fixed
$\beta>0$ and every fixed integer $k\geq2$,
\begin{equation}
  \ell s_k(A)^2\xrightarrow{\Pp}
  E_+(\beta,\rho)
  :=\max\supp\nu_\infty(\beta,\gamma,\psi).
  \label{eq:main-theorem}
\end{equation}
More strongly, for every deterministic integer sequence
$r_\ell=o(\ell)$ with $2\leq r_\ell\leq\ell-1$,
\begin{equation}
 \max_{2\leq k\leq r_\ell}
 \left|\ell s_k(A)^2-E_+(\beta,\rho)\right|
 \xrightarrow{\Pp}0.
 \label{eq:uniform-main-theorem}
\end{equation}
Consequently there is no persistent non-Perron singular-value outlier in
this fixed-temperature Gaussian model.
\end{theorem}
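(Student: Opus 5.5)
The proof has two halves: a lower bound from the global law of HCK, and a matching upper bound from a conditional local law. For the lower bound, weak convergence of $\nu_{\sqrt\ell(A-uu^\top)}$ to $\nu_\infty$ with $E_+=\max\supp\nu_\infty$ means that, for each $\varepsilon>0$, a positive fraction of the squared singular values of $\sqrt\ell(A-uu^\top)$ exceed $E_+-\varepsilon$ with probability tending to one. We pass from $A-uu^\top$ to $PAP$ and then to $A$ using two facts. First, $A\one=\one$ gives $AP=A-\sqrt\ell\,Au\,u^\top$, which differs from $A-uu^\top$ by a rank-one term. Second, compressions are governed by Cauchy interlacing, so $s_{k}(A)\ge s_{k-1}(PAP)$ and $s_k(PAP)\ge s_{k+2}(A-uu^\top)$, up to bounded index shifts. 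Since $r_\ell=o(\ell)$, the $r_\ell$-th largest singular value of $\sqrt\ell(A-uu^\top)$ still lies above $E_+-\varepsilon$ with high probability. This gives
\[
\min_{2\le k\le r_\ell}\ell s_k(A)^2\ge E_+-\varepsilon.
\]
For the upper bound, it suffices by interlacing in the other direction to show
\[
\ell\,\|PAP\|_\op^2\le E_++\varepsilon \quad\text{w.h.p.},
\]
because $s_k(A)\le s_{k-1}(PAP)$ together with one further rank-one correction coming from the left Perron direction controls $s_k(A)$ for $k\ge2$.

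To prove the upper bound we first remove the normalization. Write $A=D^{-1}E$ with $E_{ij}=e^{\beta S_{ij}}$ and $D=\diag(E\one)$. Condition on $K$; the row sums concentrate around $\ell e^{\beta^2\|k_j\|^2/(2p)}$-type averages. We replace $D$ by its deterministic-given-$K$ proxy and control the error as a perturbation of relative size $O_\Pp(\ell^{-1/2}\text{polylog})$ in operator norm after the compression $P$. This is the role of the exact decomposition of the normalizer in \cref{sec:softmax,sec:normalizer}. Conditionally on $K$, the rows $e^{\beta q_i^\top K^\top/\sqrt p}$ are i.i.d. with mean $\mu_K$ and covariance
\[
\Sigma_K=\Lambda\bigl(e^{\beta^2 KK^\top/p}-\one\one^\top\bigr)\Lambda ,
\]
an inner-product kernel conjugated by the diagonal $\Lambda=\diag(e^{\beta^2\|k_j\|^2/(2p)})$. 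The mean direction is killed exactly on the left by $P$. We then linearize the kernel entrywise, $e^{x}=1+x+x^2/2+\dots$, with diagonal corrections, following the El Karoui–type argument. This gives
\[
\Sigma_K=c_0\,\text{(low rank)}+c_1 KK^\top/p+c_2 I+\text{(rank one from the quadratic term)}+o_\Pp(1)
\]
in operator norm. The right compression $P$ removes the quadratic rank-one direction up to $o_\Pp(1)$. The compressed centered matrix is therefore, conditionally, a sample covariance $Z\Sigma^{1/2}$ with independent non-separable rows and population covariance close to $P\tilde\Sigma_KP$.

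At this point we apply the outside-support local law of Fan–Ma–Paquette–Wang. Its hypotheses are conditional concentration of quadratic forms of the feature rows, which holds by Gaussian Lipschitz concentration since $q_i$ is Gaussian and the map is locally Lipschitz on a high-probability set. The law yields that no eigenvalue of the conditional sample covariance exceeds the right edge of the deformed Marchenko–Pastur law $\MP(\rho,\hat H_K)$ by more than $\varepsilon$, where $\hat H_K$ is the spectral law of the (compressed, conditioned) covariance.

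The step I expect to be the main obstacle is the last one: identifying this finite, $K$-dependent edge with $E_+(\beta,\rho)$. Concretely, we must show that the right edge of $\MP(\rho,\hat H_K)$ converges in probability to that of $\MP(\rho,H_\infty)$, where $H_\infty$ is the law of $c_1\cdot\MP(\rho)+c_2$. Weak convergence of $\hat H_K$ alone is insufficient, since edges are not weakly continuous. I would use the Silverstein characterization of the edge as the value at the critical point of the inverse map $x(m)=-1/m+\rho\int t/(1+tm)\,dH(t)$. Combined with convergence of $\|\hat H_K\|_\infty$ to the top of $\supp H_\infty$ (which follows from the Bai–Yin edge for $KK^\top/p$), this gives uniform convergence of $x$ and its derivative on the relevant interval, and so convergence of the critical value. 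Finally, we check that this limiting edge equals $\max\supp\nu_\infty$ by matching the Stieltjes transform of $\nu_\infty$ from HCK with the deformed MP law. Combining the two halves gives \eqref{eq:uniform-main-theorem}, and \eqref{eq:main-theorem} follows as the case $r_\ell=k$.
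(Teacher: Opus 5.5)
Your architecture coincides with the paper's. It uses exact normalization and conditioning on $K$, the exact covariance $\Sigma_K=D_KM_KD_K$, and El Karoui linearization, with the right compression removing the $\frac{\beta^4}{2p}\one\one^\top$ term and the left $P$ removing the conditional mean. It then applies the FMPW outside-support law, uses a Silverstein-map stability argument for the finite edge, and obtains the lower bound from the HCK global law plus interlacing.

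\textbf{Main gap: the outer aspect ratio.} The step that fails as written is your identification of the conditional deformed Marchenko--Pastur law as $\MP(\rho,\widehat H_K)$, with edge map $x(m)=-1/m+\rho\int t(1+tm)^{-1}\,dH(t)$. The Gram matrix to which FMPW is applied is $\ell^{-1}\sum_{i=1}^{\ell}h_ih_i^\top$, with $h_i=U^\top\widetilde f_i\in\R^{\ell-1}$. There are $\ell$ samples (the query rows) in dimension $\ell-1$, so the outer aspect ratio is $(\ell-1)/\ell\to1$, not $\rho$. The ratio $\rho=\lim\ell/p$ enters only through the population law $H_{\beta,\rho}=\operatorname{Law}(a+bX)$, $X\sim\MP_\rho$. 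The correct limit is $H_{\beta,\rho}\boxtimes\MP_1$, with edge map $\Phi(q)=q^{-1}+\int t(1-qt)^{-1}\,dH_{\beta,\rho}(t)$; at finite $\ell$ the sum over the $\ell-1$ population eigenvalues carries the prefactor $1/\ell$.

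Your map instead produces the edge of $H_{\beta,\rho}\boxtimes\MP_\rho$, which differs from $E_+(\beta,\rho)$ whenever $\rho\neq1$. For large $\beta$, $H_{\beta,\rho}$ is close to $\delta_a$ on the scale $a$, and the two edges are approximately $4a$ and $a(1+\sqrt\rho)^2$. So your final matching with $\nu_\infty$ would not close. With the ratio corrected, your continuity argument is exactly the paper's conditional edge-convergence lemma, and the matching is immediate: conditioning the Gaussian equivalent $Y_{\rm lin}$ on $K$ writes it in law as $\ell^{-1/2}GT_K^{1/2}$, with $G$ an $\ell\times\ell$ Gaussian matrix and $T_K=aI_\ell+bKK^\top/p$.

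\textbf{Smaller points that also need repair.}

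\emph{Interlacing direction.} Your inequality $s_k(A)\ge s_{k-1}(PAP)$ is backwards. Since $Au=u$ exactly, $AP=A-uu^\top$ with no rank-one discrepancy. Moreover, $\ell AA^\top$ written in the basis $[u,U]$ has lower-right block $\widehat C_\ell\widehat C_\ell^\top$. Cauchy interlacing then gives $s_k(PAP)\le s_k(A)\le s_{k-1}(PAP)$ with no further rank-one correction, which you could not afford at $k=2$. Your lower bound survives, most simply via $s_k(A)\ge s_k(AP)$.

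\emph{Normalizer removal.} Removing the normalizer at relative error $\ell^{-1/2+\delta}$ requires $\norm{FP/\sqrt\ell}_{\op}=O_{\Pp}(1)$. The Frobenius bound, which is of order $\sqrt\ell$, does not give this. The paper obtains it from the local-law bound on $\widetilde FU/\sqrt\ell$, together with $\E\norm m^2\to\rho\beta^4/2$ for the conditional-mean piece $\one m^\top$.

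\emph{Quadratic-form concentration.} The Frobenius-norm quadratic-form bound in the FMPW hypotheses needs more than a one-line appeal to Gaussian Lipschitz concentration. The gradient of $q\mapsto g^\top Bg$ involves $\norm{Bg}$, and controlling that at the scale $\norm B_{\rm F}$ is itself a quadratic-form estimate. Also, $x\mapsto e^{\beta x}$ is not globally Lipschitz. The paper instead invokes FMPW Proposition~2.17(b), which allows coordinate-dependent analytic activations with an $(r!)^{-\alpha}$ coefficient envelope. It obtains uniformity over good key matrices by a subsequence argument.

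\emph{Left projection.} You should say explicitly that the left $P$ is discarded, via $\norm{P\widetilde FP}_{\op}\le\norm{\widetilde FU}_{\op}$, before the local law is applied. This is necessary because the left $P$ mixes the conditionally independent rows.
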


\begin{remark}[Scale in terms of the embedding dimension]
Since $\ell/d\to\gamma$,
\[
  d s_k(A)^2\xrightarrow{\Pp}\frac1\gamma
  E_+(\beta,\gamma/\psi),\qquad k\geq2\text{ fixed}.
\]
The natural scale is $\ell$, not $d$.
\end{remark}

\section{The limiting edge}
\label{sec:edge-description}

Set
\begin{equation}
  a:=e^{\beta^2}-1-\beta^2>0,
  \qquad b:=\beta^2.
  \label{eq:a-b}
\end{equation}
Let $\MP_\rho$ denote the Marchenko--Pastur law associated with
$GG^\top/p$ when $G$ is $\ell\times p$ standard Gaussian and
$\ell/p\to\rho$.  Its continuous support is
\([
  (1-\sqrt\rho)^2,(1+\sqrt\rho)^2
]\), with an additional atom of mass $1-\rho^{-1}$ at zero when $\rho>1$.
Define
\begin{equation}
  H_{\beta,\rho}:=\operatorname{Law}(a+bX),
  \qquad X\sim\MP_\rho,
  \label{eq:H-law}
\end{equation}
and
\begin{equation}
  L_{\beta,\rho}:=a+b(1+\sqrt\rho)^2
  =\max\supp H_{\beta,\rho}.
  \label{eq:L}
\end{equation}

The HCK Gaussian equivalent is
\begin{equation}
  Y_{\rm lin}
  =\frac{\beta}{\sqrt{\ell p}}QK^\top
   +\frac{\sqrt a}{\sqrt\ell}W,
  \label{eq:HCK-linear}
\end{equation}
where $W$ is an independent $\ell\times\ell$ standard Gaussian matrix
\cite[Eq.~(3.4)]{HayaseCollinsKarakida2026}.

\begin{proposition}[Identification of the HCK bulk law]
\label{prop:HCK-law-identification}
With the convention above,
\begin{equation}
  \nu_\infty=H_{\beta,\rho}\boxtimes\MP_1.
  \label{eq:nu-free-convolution}
\end{equation}
In particular, the outer Marchenko--Pastur aspect ratio is one; the ratio
$\rho$ occurs inside the population law $H_{\beta,\rho}$.
\end{proposition}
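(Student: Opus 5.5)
Since HCK identify $\nu_\infty$ as the limiting squared singular-value law of the Gaussian equivalent $Y_{\rm lin}$ in \eqref{eq:HCK-linear}, the plan is to compute the limiting squared singular-value law of $Y_{\rm lin}$ directly, by conditioning on $Q$ and $K$. Write $Y_{\rm lin}=M+\sqrt{a/\ell}\,W$ with $M:=\beta(\ell p)^{-1/2}QK^\top$, and consider
\[
  Y_{\rm lin}Y_{\rm lin}^\top .
\]
The difficulty is that $M$ and $W$ enter additively, not multiplicatively. So the first step is to rewrite the model in a form where the free multiplicative structure is visible.

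The key observation is a Gaussian identity: the rows of $Y_{\rm lin}$ are what matter. Let $G:=Q/\sqrt p$. Then
\[
  Y_{\rm lin}^\top=\ell^{-1/2}\bigl(\beta KG^\top+\sqrt a\,W^\top\bigr).
\]
Conditionally on $Q$, the matrix $K$ has i.i.d.\ $\mathcal N(0,1)$ entries and is independent of $W$. Hence each row of $\beta K G^\top+\sqrt a\,W^\top$ is a centered Gaussian vector in $\R^\ell$ with covariance
\[
  \Sigma:=\beta^2GG^\top+aI_\ell=a+b\,GG^\top ,
\]
and distinct rows are conditionally independent. Therefore, conditionally on $Q$,
\[
  \sqrt\ell\,Y_{\rm lin}^\top\overset{d}{=}Z\Sigma^{1/2},
\]
with $Z$ an $\ell\times\ell$ standard Gaussian matrix independent of $Q$. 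It follows that $Y_{\rm lin}^\top Y_{\rm lin}\overset{d}{=}\ell^{-1}\Sigma^{1/2}Z^\top Z\Sigma^{1/2}$, which has the same nonzero eigenvalues as a sample covariance matrix with population $\Sigma$ and aspect ratio one. The squared singular values of $Y_{\rm lin}$ are exactly these eigenvalues.

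Next, the empirical spectral law of $GG^\top$ converges almost surely to $\MP_\rho$, including the atom at $0$ when $\rho>1$, since $\ell/p\to\rho$. So the spectral law of $\Sigma$ converges to $H_{\beta,\rho}$. Because $\Sigma$ is independent of $Z$ and $\norm{\Sigma}_{\op}$ is bounded almost surely eventually, the Marchenko--Pastur/Silverstein theorem for $\ell^{-1}\Sigma^{1/2}Z^\top Z\Sigma^{1/2}$ applies conditionally. Equivalently, asymptotic freeness of the Wishart matrix $Z^\top Z/\ell$ from $\Sigma$ applies. Either route gives weak convergence of the squared singular-value law to $H_{\beta,\rho}\boxtimes\MP_1$, and convergence of moments follows from the uniform operator-norm bounds. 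By HCK's Gaussian equivalence, the limit of $\nu_{\sqrt\ell(A-uu^\top)}$ coincides with that of $\nu_{Y_{\rm lin}}$. This yields \eqref{eq:nu-free-convolution}.

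The main obstacle is bookkeeping of conventions rather than analysis, in two places. First, one must check that HCK's $\nu_\infty$ is normalized as the law of $s_j(Y_{\rm lin})^2$ with exactly the scaling $\ell^{-1/2}$ in \eqref{eq:HCK-linear}, and that their parameters reduce to $\rho=\gamma/\psi$. Second, one must confirm that the outer aspect ratio is $\ell/\ell=1$, because $Z$ is square. I would verify both by matching the first two moments: $\int x\,d\nu_\infty=a+b$ and $\int x^2\,d\nu_\infty=2(a+b)^2+b^2\rho$. Then I would compare these with HCK's stated moment formulas.
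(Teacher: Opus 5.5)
Your proof is correct and is essentially the paper's argument transposed. The paper conditions on $K$ and reads off the rows of $Y_{\rm lin}$ as independent $\mathcal N(0,\ell^{-1}T_K)$ vectors with $T_K=aI_\ell+bKK^\top/p$; you condition on $Q$ and do the same for the rows of $Y_{\rm lin}^\top$, with population $aI_\ell+bQQ^\top/p$. By the $Q\leftrightarrow K$ symmetry this gives the same aspect-ratio-one sample-covariance structure, and you make the same final appeal to HCK's Gaussian equivalence. (Minor slip: $\ell^{-1}\Sigma^{1/2}Z^\top Z\Sigma^{1/2}$ is the conditional law of $Y_{\rm lin}Y_{\rm lin}^\top$, not $Y_{\rm lin}^\top Y_{\rm lin}$, which is harmless because $Y_{\rm lin}$ is square.)
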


\begin{proof}
Condition on $K$, and write $q_i,w_i\in\R^p\times\R^\ell$ for the
independent Gaussian vectors generating the $i$th row of the two terms in
\eqref{eq:HCK-linear}.  That row is
\[
 \frac1{\sqrt\ell}
 \left(\frac\beta{\sqrt p}q_i^\top K^\top+\sqrt a\,w_i^\top\right),
\]
so its conditional covariance is
\[
 \frac1\ell T_K,
 \qquad T_K:=aI_\ell+b\frac{KK^\top}{p}.
\]
The rows are conditionally independent.  Consequently, exactly in
conditional distribution,
\begin{equation}
  Y_{\rm lin}\ \overset{d}{=}\
  \frac1{\sqrt\ell}G T_K^{1/2},
  \label{eq:linear-sample-cov}
\end{equation}
where $G\in\R^{\ell\times\ell}$ is standard Gaussian and independent of
$K$.  The Marchenko--Pastur theorem gives
\(
 \ESD(T_K)\Rightarrow H_{\beta,\rho}
\)
almost surely.  The standard sample-covariance limit applied conditionally
in \eqref{eq:linear-sample-cov} therefore gives the squared singular-value
law $H_{\beta,\rho}\boxtimes\MP_1$.  HCK identify the same deterministic
limit through their Gaussian equivalent, so uniqueness of weak limits proves
\eqref{eq:nu-free-convolution}.
\end{proof}

For $0<q<L_{\beta,\rho}^{-1}$, introduce the positive-variable
Silverstein map
\begin{equation}
  \Phi_{\beta,\rho}(q)
  :=\frac1q+\int\frac{t}{1-qt}\,dH_{\beta,\rho}(t).
  \label{eq:Phi}
\end{equation}
Its derivatives are
\begin{align}
  \Phi_{\beta,\rho}'(q)
  &=-\frac1{q^2}
    +\int\frac{t^2}{(1-qt)^2}\,dH_{\beta,\rho}(t),
  \label{eq:Phi-prime}\\
  \Phi_{\beta,\rho}''(q)
  &=\frac2{q^3}
    +2\int\frac{t^3}{(1-qt)^3}\,dH_{\beta,\rho}(t)>0.
  \label{eq:Phi-second}
\end{align}
As $q\downarrow0$, the first derivative tends to $-\infty$; as
$q\uparrow L_{\beta,\rho}^{-1}$, its integral term diverges to $+\infty$
because $H_{\beta,\rho}$ has positive mass in every left neighborhood of
$L_{\beta,\rho}$ with the Marchenko--Pastur square-root edge density.
Hence there is a unique $q_*=q_*(\beta,\rho)$ satisfying
\begin{equation}
  \frac1{q_*^2}
  =\int\frac{t^2}{(1-q_*t)^2}\,dH_{\beta,\rho}(t).
  \label{eq:q-star}
\end{equation}
The support characterization of Silverstein and Choi
\cite{SilversteinChoi1995} gives
\begin{equation}
  E_+(\beta,\rho)
  =\Phi_{\beta,\rho}(q_*),
  \qquad \rho=\gamma/\psi.
  \label{eq:E-Phi}
\end{equation}

For computation, let
\begin{equation}
  M_\rho(z):=\int\frac1{1-zx}\,d\MP_\rho(x).
  \label{eq:MP-moment-transform}
\end{equation}
On the physical branch $M_\rho(0)=1$,
\begin{equation}
 M_\rho(z)=
 \frac{1-(1-\rho)z-
 \sqrt{\bigl(1-(1+\sqrt\rho)^2z\bigr)
              \bigl(1-(1-\sqrt\rho)^2z\bigr)}}
 {2\rho z}.
 \label{eq:MP-explicit}
\end{equation}
Since
\[
  1-q(a+bx)=(1-aq)
  \left(1-\frac{bq}{1-aq}x\right),
\]
the elementary identity
\(
  q^{-1}+t/(1-qt)=1/[q(1-qt)]
\)
gives
\begin{equation}
  \Phi_{\beta,\rho}(q)
  =\frac1{q(1-aq)}
  M_\rho\!\left(\frac{bq}{1-aq}\right).
  \label{eq:Phi-M}
\end{equation}

The next proposition removes the integral from the edge computation.

\begin{proposition}[Scalar formula for the proportional edge]
\label{prop:scalar-edge}
Let $\kappa:=a/b$ and let $\eta_*$ be the unique root in
$(0,\rho^{-1/2})$ of
\begin{equation}
  2\rho^2\eta^3+3\rho\eta^2
  +(\kappa+1-\rho)\eta-1=0.
  \label{eq:eta-cubic}
\end{equation}
With
\begin{equation}
  D_*:=1+(\kappa+1+\rho)\eta_*+\rho\eta_*^2,
  \label{eq:D-star}
\end{equation}
one has
\begin{equation}
  q_*=\frac{\eta_*}{bD_*},
  \qquad
  E_+(\beta,\rho)
  =\frac{bD_*^2}{\eta_*(1+\rho\eta_*)},
  \quad \rho=\gamma/\psi.
  \label{eq:edge-scalar}
\end{equation}
\end{proposition}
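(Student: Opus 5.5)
The plan is to uniformize the Marchenko--Pastur moment transform rationally and push the equation \eqref{eq:q-star} through that change of variables. On the physical branch, $M=M_\rho(z)$ from \eqref{eq:MP-explicit} solves the quadratic
\[
\rho zM^2-\bigl(1-(1-\rho)z\bigr)M+1=0.
\]
Substituting $M=1+\eta$, this becomes exactly $z(1+\eta)(1+\rho\eta)=\eta$. Hence
\[
z(\eta)=\frac{\eta}{(1+\eta)(1+\rho\eta)},\qquad M_\rho(z(\eta))=1+\eta .
\]

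\textbf{Step 1: the parametrization covers $(0,L_{\beta,\rho}^{-1})$.} We have $z'(\eta)\propto 1-\rho\eta^2$. So $z$ increases strictly on $(0,\rho^{-1/2})$, from $0$ to $z(\rho^{-1/2})=(1+\sqrt\rho)^{-2}$. The branch with $M(0)=1$ corresponds to $\eta$ near $0$. By continuity and monotonicity, it is the branch $\eta\in(0,\rho^{-1/2})$ for all $z\in(0,(1+\sqrt\rho)^{-2})$.

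Next, $z=bq/(1-aq)$ inverts to $q=z/(b+az)$, which is increasing in $z$. Plugging in $z(\eta)$ gives
\[
q(\eta)=\frac{\eta}{b\bigl[(1+\eta)(1+\rho\eta)+\kappa\eta\bigr]}=\frac{\eta}{bD(\eta)},\qquad D(\eta)=1+(\kappa+1+\rho)\eta+\rho\eta^2 .
\]
At the endpoint $\eta=\rho^{-1/2}$ this equals $1/(a+b(1+\sqrt\rho)^2)=L_{\beta,\rho}^{-1}$. Thus $\eta\mapsto q(\eta)$ is an increasing bijection $(0,\rho^{-1/2})\to(0,L_{\beta,\rho}^{-1})$.

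\textbf{Step 2: closed form for $\Phi_{\beta,\rho}$.} Use \eqref{eq:Phi-M} together with the identities $1-aq=b/(b+az)$ and $b+az=bD/\bigl((1+\eta)(1+\rho\eta)\bigr)$. A short computation then gives
\[
\Phi_{\beta,\rho}(q(\eta))=\frac{bD(\eta)^2}{\eta(1+\rho\eta)} .
\]

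\textbf{Step 3: locating the critical point.} By \eqref{eq:Phi-second}, $\Phi_{\beta,\rho}$ is strictly convex, so $q_*$ is its unique critical point, hence its unique minimizer. Since $q(\eta)$ is a smooth increasing bijection with $q'(\eta)>0$, the point $q_*$ corresponds to the unique critical point of $\eta\mapsto \log\bigl[D^2/(\eta(1+\rho\eta))\bigr]$ on $(0,\rho^{-1/2})$. Writing $c=\kappa+1+\rho$, the condition $2D'/D=1/\eta+\rho/(1+\rho\eta)$ reads
\[
2\eta(1+\rho\eta)(c+2\rho\eta)=D(1+2\rho\eta).
\]
Expanding both sides and subtracting, the quadratic-in-$c$ terms cancel. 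What remains is
\[
2\rho^2\eta^3+3\rho\eta^2+(c-2\rho)\eta-1=0,
\]
which is \eqref{eq:eta-cubic}.

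This cubic equals $-1$ at $\eta=0$. It is positive at $\eta=\rho^{-1/2}$, where it equals $2\rho^{1/2}+3+\kappa\rho^{-1/2}+\rho^{-1/2}-\rho^{1/2}-1>0$. So a root exists in the interval. Uniqueness follows from the uniqueness of $q_*$ via the bijection. Then \eqref{eq:E-Phi} and Step 2 yield \eqref{eq:edge-scalar}.

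\textbf{Main obstacle.} The only delicate point is the branch bookkeeping in Step 1. One must check that the square root in \eqref{eq:MP-explicit} (with the $M(0)=1$ convention) matches $\eta\in(0,\rho^{-1/2})$ rather than the conjugate root $\eta'$ with $\eta\eta'=1/\rho$. This holds uniformly for $\rho\le1$ and $\rho>1$, since the transform only sees the continuous edge $(1+\sqrt\rho)^2$; the atom at zero for $\rho>1$ does not affect it. I would verify this by small-$z$ expansion plus continuity up to the endpoint, where the two roots coalesce.
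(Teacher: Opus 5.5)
Your proposal is correct and follows essentially the same route as the paper: the substitution $\eta=M_\rho(z)-1$ with the physical branch $0<\eta<\rho^{-1/2}$, the increasing parametrization $q(\eta)=\eta/(bD(\eta))$, the closed form $\Phi_{\beta,\rho}(q(\eta))=bD(\eta)^2/(\eta(1+\rho\eta))$, the reduction of $\Phi_{\beta,\rho}'=0$ to the cubic, the sign check at $0$ and $\rho^{-1/2}$, and uniqueness from strict convexity. One cosmetic slip: in Step 3 there are no terms quadratic in $c$; what cancels is the $2c\rho\eta^2$ term, and your resulting cubic is correct.
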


\begin{proof}
The moment transform satisfies
\begin{equation}
  M_\rho(z)=1+zM_\rho(z)
  \bigl(1-\rho+\rho M_\rho(z)\bigr).
  \label{eq:MP-fixed-point}
\end{equation}
Put $\eta=M_\rho(z)-1$.  Solving \eqref{eq:MP-fixed-point} for $z$ gives
\begin{equation}
  z=\frac{\eta}{(1+\eta)(1+\rho\eta)}.
  \label{eq:z-eta}
\end{equation}
The physical branch is exactly $0<\eta<\rho^{-1/2}$: on this interval
\begin{equation}
  \frac{dz}{d\eta}
  =\frac{1-\rho\eta^2}
  {(1+\eta)^2(1+\rho\eta)^2}>0,
  \label{eq:z-eta-derivative}
\end{equation}
and the endpoint $\eta=\rho^{-1/2}$ maps to
$(1+\sqrt\rho)^{-2}$, the first singularity of \eqref{eq:MP-explicit}.

Equating \eqref{eq:z-eta} with $z=bq/(1-aq)$ yields
\begin{equation}
  q=q(\eta)=\frac{\eta}{bD(\eta)},
  \qquad
  D(\eta)=1+(\kappa+1+\rho)\eta+\rho\eta^2.
  \label{eq:q-eta}
\end{equation}
Substitution in \eqref{eq:Phi-M}, using $M_\rho(z)=1+\eta$, gives
\begin{equation}
  \Phi_{\beta,\rho}(q(\eta))
  =\frac{bD(\eta)^2}{\eta(1+\rho\eta)}.
  \label{eq:Phi-eta}
\end{equation}
Also
\begin{equation}
  q'(\eta)=\frac{1-\rho\eta^2}{bD(\eta)^2}>0.
  \label{eq:q-eta-derivative}
\end{equation}
Thus $\Phi'(q)=0$ is equivalent to differentiating the right side of
\eqref{eq:Phi-eta} with respect to $\eta$.  After clearing its positive
denominator, the derivative is zero precisely when
\eqref{eq:eta-cubic} holds.  The cubic is negative at zero and positive at
$\rho^{-1/2}$; uniqueness in the physical interval also follows from the
strict convexity \eqref{eq:Phi-second} and monotonicity
\eqref{eq:q-eta-derivative}.  Equations \eqref{eq:q-eta} and
\eqref{eq:Phi-eta} now give \eqref{eq:edge-scalar}.
\end{proof}

\section{Exact softmax reduction}
\label{sec:softmax}

Define the centered exponential feature matrix $F\in\R^{\ell\times\ell}$
by
\begin{equation}
  F_{ij}:=e^{\beta S_{ij}-\beta^2/2}-1,
  \label{eq:F}
\end{equation}
and define
\begin{equation}
  r_i:=\frac1\ell\sum_{j=1}^{\ell}F_{ij},
  \qquad
  R:=\diag(r_1,\ldots,r_\ell)
  =\diag\!\left(\frac1\ell F\one\right).
  \label{eq:R}
\end{equation}
We record all steps of the normalization calculation.  From
\eqref{eq:F},
\begin{equation}
  e^{\beta S_{ij}}=e^{\beta^2/2}(1+F_{ij}).
  \label{eq:exp-F}
\end{equation}
Therefore the $i$th row normalizer is
\begin{align}
  Z_i
  &:=\sum_{j=1}^{\ell}e^{\beta S_{ij}}
    =e^{\beta^2/2}\sum_{j=1}^{\ell}(1+F_{ij}) \notag\\
  &=e^{\beta^2/2}
    \left(\ell+\sum_{j=1}^{\ell}F_{ij}\right)
    =e^{\beta^2/2}\ell(1+r_i).
  \label{eq:row-normalizer}
\end{align}
Consequently
\begin{equation}
  A_{ij}=\frac{1+F_{ij}}{\ell(1+r_i)}.
  \label{eq:A-entry}
\end{equation}
Since $uu^\top=\ell^{-1}\one\one^\top$, this is the exact identity
\begin{equation}
  A=(I+R)^{-1}\left(uu^\top+\frac F\ell\right).
  \label{eq:A-exact}
\end{equation}
There is no approximation in \eqref{eq:A-exact}.  It also verifies $Au=u$:
\[
 \left(uu^\top+\frac F\ell\right)u
 =u+\frac{F\one}{\ell\sqrt\ell}
 =u+Ru=(I+R)u.
\]
Hence
\begin{equation}
  A-uu^\top=A-Auu^\top=AP.
  \label{eq:Aperp}
\end{equation}
Right multiplication of \eqref{eq:A-exact} by $P$ gives
\begin{equation}
  \sqrt\ell\,AP=(I+R)^{-1}\frac{FP}{\sqrt\ell}.
  \label{eq:B-exact}
\end{equation}
The double compression used below is
\begin{equation}
  C_\ell:=\sqrt\ell\,PAP
  =P(I+R)^{-1}\frac{FP}{\sqrt\ell}.
  \label{eq:C-exact}
\end{equation}

\section{Uniform concentration of the row normalizers}
\label{sec:normalizer}

HCK prove the needed estimate in their square reduction
\cite[Lemma~B.10 in the extended arXiv version]{HayaseCollinsKarakida2026}.
The following argument records
its rectangular extension, for which $\ell/p\to\rho$ is the relevant ratio.

\begin{lemma}[Rectangular normalizer estimate]
\label{lem:normalizer}
For every fixed $\delta>0$,
\begin{equation}
  \ell^{1/2-\delta}\norm R_{\op}
  \xrightarrow{\Pp}0.
  \label{eq:R-rate}
\end{equation}
In fact the failure probability in \eqref{eq:R-rate} can be made smaller
than any prescribed negative power of $\ell$.
\end{lemma}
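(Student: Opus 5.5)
Since $R$ is diagonal, $\norm R_{\op}=\max_i\abs{r_i}$, where $r_i=\ell^{-1}\sum_j F_{ij}$. The plan is to condition on the query matrix $Q$. Given $Q$, the $\ell$ scores in row $i$ are $S_{ij}=q_i^\top k_j/\sqrt p$ with $k_j$ the i.i.d.\ standard Gaussian rows of $K$. So, conditionally on $Q$, the summands $F_{i1},\dots,F_{i\ell}$ are i.i.d.\ with $S_{ij}\sim\mathcal N(0,\sigma_i^2)$ and $\sigma_i^2:=\norm{q_i}^2/p$. Hence
\[
\E[F_{ij}\mid Q]=e^{\beta^2(\sigma_i^2-1)/2}-1 .
\]
We split
\[
r_i=\bigl(e^{\beta^2(\sigma_i^2-1)/2}-1\bigr)+\frac1\ell\sum_j\bigl(F_{ij}-\E[F_{ij}\mid Q]\bigr)
\]
and bound both pieces uniformly in $i$. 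A union bound over $i\le\ell$ then costs only a factor $\ell$, which is harmless against superpolynomially small or arbitrarily-high-power tail bounds.

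\textbf{Mean term.} By chi-square (Laurent--Massart) concentration, $\abs{\sigma_i^2-1}\le C\sqrt{\log\ell/p}$ for all $i$, with failure probability $O(\ell^{-D})$ for any $D$. Since $p\asymp\ell$ by \eqref{eq:proportional}, on this event the mean term is $O(\sqrt{\log\ell/\ell})=o(\ell^{-1/2+\delta})$.

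\textbf{Fluctuation term.} On the same event we have $\sigma_i^2\le2$. The summands $e^{\beta S_{ij}}$ are log-normal: they have all moments but no exponential moments, so a plain Bernstein inequality is not available. I would use a high-moment Marcinkiewicz--Zygmund/Rosenthal bound instead. For even $m$,
\[
\E\Bigl[\Bigl|\tfrac1\ell\sum_j \bigl(F_{ij}-\E[F_{ij}\mid Q]\bigr)\Bigr|^m\,\Big|\,Q\Bigr]\le C_m\,\ell^{-m/2}\,\E[\abs{F_{ij}}^m\mid Q],
\]
and $\E[\abs{F_{ij}}^m\mid Q]\le C'_m e^{m^2\beta^2\sigma_i^2/2}$, which is bounded by a constant depending only on $m,\beta$ when $\sigma_i^2\le2$. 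Markov's inequality then gives
\[
\Pp\bigl(\text{fluctuation}>\ell^{-1/2+\delta}\mid Q\bigr)\le C_m\ell^{-m\delta}.
\]
After the union bound over $i$ this is $C_m\ell^{1-m\delta}$. Choosing $m>(D+1)/\delta$ makes it smaller than $\ell^{-D}$.

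\textbf{Conclusion and main obstacle.} Combining the two pieces gives $\max_i\abs{r_i}\le 2\ell^{-1/2+\delta}$ with failure probability $O(\ell^{-D})$. Applying this with $\delta/2$ in place of $\delta$ yields $\ell^{1/2-\delta}\norm R_{\op}\le 2\ell^{-\delta/2}\to0$, together with the polynomial tail claim. The main obstacle is the heavy (log-normal) tails, which rule out sub-exponential concentration. The moment method handles this only because the conditional variance $\sigma_i^2$ is uniformly bounded on a high-probability event, which is why conditioning on $Q$ comes first. A truncation at level $\ell^{\epsilon}$ would be a fallback, but the moment route seems cleaner.
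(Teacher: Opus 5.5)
Your proposal is correct and follows essentially the same route as the paper: condition on the queries so that each row's scores are i.i.d.\ $\mathcal N(0,\norm{q_i}^2/p)$, bound the fluctuation with a high-moment Rosenthal inequality plus Markov and a union bound over rows on the event $\max_i\norm{q_i}^2/p\leq2$, and replace the conditional mean $e^{\beta^2(\norm{q_i}^2/p-1)/2}$ by $1$ using uniform chi-square concentration. Working with $F_{ij}$ instead of $e^{\beta S_{ij}}$, and the final $\delta\mapsto\delta/2$ adjustment, are only cosmetic differences.
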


\begin{proof}
Write $q_i^\top$ for the $i$th row of $Q$ and set
\begin{equation}
  v_i:=\frac{\norm{q_i}^2}{p}.
  \label{eq:v-i}
\end{equation}
Conditionally on $q_i$, the random variables
\(
  S_{ij}=q_i^\top k_j/\sqrt p
\), $j=1,\ldots,\ell$, are independent $\mathcal N(0,v_i)$.  Hence
\begin{equation}
  \mu_i:=\E_K[e^{\beta S_{ij}}\mid q_i]
  =e^{\beta^2v_i/2}.
  \label{eq:conditional-row-mean}
\end{equation}

On the event $\max_i v_i\leq2$, all fixed centered moments of
$e^{\beta S_{ij}}$ are bounded uniformly in $i,\ell,p$.  Thus, for every
fixed even integer $m\geq2$, Rosenthal's inequality gives
\begin{equation}
 \E_K\!\left[
 \left|\frac1\ell\sum_{j=1}^{\ell}
 (e^{\beta S_{ij}}-\mu_i)\right|^m
 \middle|Q\right]
 \leq C_{m,\beta}\ell^{-m/2}.
 \label{eq:Rosenthal}
\end{equation}
Markov's inequality and a union bound over the $\ell$ rows therefore yield,
for each $\varepsilon>0$,
\begin{equation}
 \Pp_K\!\left(
 \max_i\left|\frac1\ell\sum_j
 (e^{\beta S_{ij}}-\mu_i)\right|
 >\varepsilon\ell^{-1/2+\delta}
 \middle|Q\right)
 \leq C_{m,\beta,\varepsilon}\ell^{1-m\delta}
 \label{eq:row-union}
\end{equation}
on that event.  Since $m$ may be chosen arbitrarily large, the right side
has any desired polynomial decay.

It remains to replace the conditional means $\mu_i$ by $e^{\beta^2/2}$.
Uniform chi-square concentration gives
\begin{equation}
  \max_{1\leq i\leq\ell}|v_i-1|
  =O_{\Pp}\!\left(\sqrt{\frac{\log\ell}{p}}
                      +\frac{\log\ell}{p}\right).
  \label{eq:query-chi-square}
\end{equation}
More explicitly, the standard chi-square tail bound implies, for $x>0$,
\[
 \Pp\left(
  |v_i-1|>2\sqrt{x/p}+2x/p
 \right)\leq2e^{-x}.
\]
Taking $x=(D+2)\log\ell$ and applying a union bound makes the failure
probability $O(\ell^{-D-1})$ for any prescribed $D>0$.
Because $p\asymp\ell$, the right side is
$o_{\Pp}(\ell^{-1/2+\delta})$.  The mean-value theorem applied to
$v\mapsto e^{\beta^2v/2}$ shows that
\begin{equation}
  \max_i|\mu_i-e^{\beta^2/2}|
  =o_{\Pp}(\ell^{-1/2+\delta}).
  \label{eq:mean-replacement}
\end{equation}
Finally,
\[
  r_i=e^{-\beta^2/2}
      \left(\frac1\ell\sum_j e^{\beta S_{ij}}\right)-1.
\]
Combining \eqref{eq:row-union} and \eqref{eq:mean-replacement} proves the
claim.
\end{proof}

On $\{\norm R\leq1/2\}$,
\begin{equation}
  \norm{(I+R)^{-1}-I}
  =\norm{-(I+R)^{-1}R}\leq2\norm R.
  \label{eq:inverse-bound}
\end{equation}
To remove the normalizer from \eqref{eq:C-exact}, we will also prove
\begin{equation}
  \norm{FP/\sqrt\ell}=O_{\Pp}(1).
  \label{eq:F-tightness-needed}
\end{equation}

\section{Conditioning on the keys}
\label{sec:conditioning}

Condition on $K$.  The rows $q_1,\ldots,q_\ell$ remain independent standard
Gaussian vectors, so the rows of $F$ are conditionally independent and
identically distributed, although their coordinates are dependent.

\subsection{Conditional mean}

For fixed $j$, the score is a Gaussian linear functional of $q_i$:
\begin{equation}
  S_{ij}\mid K\sim
  \mathcal N\!\left(0,\frac{\norm{k_j}^2}{p}\right).
  \label{eq:conditional-score-law}
\end{equation}
The one-dimensional Gaussian moment-generating formula therefore gives
\begin{align}
  \E_Q[e^{\beta S_{ij}}\mid K]
  &=\exp\!\left(\frac{\beta^2}{2}
                 \frac{\norm{k_j}^2}{p}\right),
  \label{eq:conditional-exp-moment}\\
  \E_Q[F_{ij}\mid K]
  &=e^{-\beta^2/2}
    \E_Q[e^{\beta S_{ij}}\mid K]-1 \notag\\
  &=\exp\!\left[
      \frac{\beta^2}{2}
      \left(\frac{\norm{k_j}^2}{p}-1\right)
    \right]-1.
  \label{eq:conditional-F-mean}
\end{align}
Define
\begin{equation}
  D_j:=\exp\!\left[
      \frac{\beta^2}{2}
      \left(\frac{\norm{k_j}^2}{p}-1\right)
    \right],
  \quad m_j:=D_j-1,
  \quad D_K:=\diag(D_1,\ldots,D_\ell),
  \label{eq:D-m}
\end{equation}
and, with $m=(m_1,\ldots,m_\ell)^\top$,
\begin{equation}
  \widetilde F:=F-\one m^\top.
  \label{eq:F-tilde}
\end{equation}
The rows of $\widetilde F$ are conditionally independent, centered, and
identically distributed.  Since $P\one=0$,
\begin{equation}
  PFP=P\widetilde FP.
  \label{eq:mean-killed}
\end{equation}
Thus the left Perron projection removes the conditional-mean spike exactly.

\subsection{Conditional covariance}

Let $\widetilde f_i^\top$ be a row of $\widetilde F$ and set
\begin{equation}
  \Sigma_K:=\E_Q[\widetilde f_i\widetilde f_i^\top\mid K].
  \label{eq:Sigma-def}
\end{equation}
Put $X_{ij}:=e^{\beta S_{ij}-\beta^2/2}=1+F_{ij}$.  Subtracting constants
does not change covariance, so
\begin{equation}
  \Sigma_K(j,k)
  =\operatorname{Cov}_Q(X_{ij},X_{ik}\mid K).
  \label{eq:Sigma-cov}
\end{equation}
Conditionally on $K$, the pair $(S_{ij},S_{ik})$ is jointly Gaussian and
\begin{align}
  \operatorname{Var}_Q(S_{ij}\mid K)&=\frac{\norm{k_j}^2}{p},
  &\operatorname{Var}_Q(S_{ik}\mid K)&=\frac{\norm{k_k}^2}{p},
  \label{eq:pair-vars}\\
  \operatorname{Cov}_Q(S_{ij},S_{ik}\mid K)
  &=\frac{k_j^\top k_k}{p}.
  \label{eq:pair-cov}
\end{align}
Using the two-dimensional Gaussian moment-generating formula,
\begin{align}
 \E_Q[X_{ij}X_{ik}\mid K]
 &=e^{-\beta^2}
   \E_Q[e^{\beta(S_{ij}+S_{ik})}\mid K] \notag\\
 &=\exp\!\left\{-\beta^2+
   \frac{\beta^2}{2}\left(
   \frac{\norm{k_j}^2}{p}+\frac{\norm{k_k}^2}{p}
   +2\frac{k_j^\top k_k}{p}\right)\right\} \notag\\
 &=D_jD_k\exp\!\left(\beta^2\frac{k_j^\top k_k}{p}\right).
 \label{eq:pair-mgf}
\end{align}
Since $\E_Q[X_{ij}\mid K]=D_j$,
\begin{equation}
  \Sigma_K(j,k)=D_jD_k
  \left[\exp\!\left(\beta^2\frac{k_j^\top k_k}{p}\right)-1\right].
  \label{eq:Sigma-entry}
\end{equation}
Define the inner-product kernel matrix
\begin{equation}
  M_K(j,k):=\exp\!\left(\beta^2\frac{k_j^\top k_k}{p}\right)-1.
  \label{eq:M-K}
\end{equation}
Because $D_K$ is diagonal,
\[
  (D_KM_KD_K)_{jk}=D_jM_K(j,k)D_k=\Sigma_K(j,k).
\]
We have proved the exact factorization
\begin{equation}
  \Sigma_K=D_KM_KD_K.
  \label{eq:Sigma-factorization}
\end{equation}

\section{Operator-norm approximation of the conditional covariance}
\label{sec:kernel}

We use the following specialization of El Karoui's inner-product-kernel
approximation~\cite[Theorem~2.1]{ElKaroui2010}.

\begin{proposition}[El Karoui, Gaussian isotropic specialization]
\label{prop:El-Karoui}
Let $x_1,\ldots,x_n\in\R^p$ be independent standard Gaussian vectors, let
$n/p$ remain bounded above and below, and let $h$ be $C^3$ in a neighborhood
of zero and $C^1$ in a neighborhood of one.  If
\(
  \mathcal M_{jk}=h(x_j^\top x_k/p)
\), then
\begin{equation}
 \left\|
 \mathcal M-\left[
 \left(h(0)+\frac{h''(0)}{2p}\right)\one\one^\top
 +h'(0)\frac{XX^\top}{p}
 +\bigl(h(1)-h(0)-h'(0)\bigr)I_n
 \right]\right\|_{\op}
 \xrightarrow{\Pp}0,
 \label{eq:El-Karoui-special}
\end{equation}
where $X$ has rows $x_j^\top$.
\end{proposition}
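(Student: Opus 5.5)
The plan is to prove \eqref{eq:El-Karoui-special} directly, following El Karoui's scheme, by splitting $\mathcal M$ into its diagonal and off-diagonal parts.

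\textbf{Diagonal part.} The diagonal entries are $h(\norm{x_j}^2/p)$. Uniform chi-square concentration, as in \eqref{eq:query-chi-square}, gives $\max_j|\norm{x_j}^2/p-1|=O_{\Pp}(\sqrt{\log n/p})$. Since $h$ is $C^1$ near one, these entries equal $h(1)+o_{\Pp}(1)$ uniformly in $j$, and this diagonal error is $o_{\Pp}(1)$ in operator norm.

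\textbf{Off-diagonal part.} Put $\tau_{jk}=x_j^\top x_k/p$ for $j\neq k$. These satisfy $\max_{j\neq k}|\tau_{jk}|=O_{\Pp}(\sqrt{\log n/p})$, so they all lie in the $C^3$ neighborhood of zero. Taylor expansion gives
\[
h(\tau_{jk})=h(0)+h'(0)\tau_{jk}+\tfrac{h''(0)}2\tau_{jk}^2+\tfrac16h'''(\xi_{jk})\tau_{jk}^3 .
\]
We treat the four terms in turn.
\begin{enumerate}[label=(\roman*)]
\item The constant term gives $h(0)(\one\one^\top-I)$.
\item The linear term gives $h'(0)\bigl(XX^\top/p-\diag(\norm{x_j}^2/p)\bigr)$, and the diagonal correction is $h'(0)I+o_{\Pp}(1)$.
\item The cubic remainder has entries of size $O((\log n/p)^{3/2})$. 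Its Frobenius norm is therefore at most $n\cdot O(p^{-3/2}\log^{3/2}n)=o_{\Pp}(1)$, which is negligible.
\item The quadratic term is the main obstacle. Write $\tau_{jk}^2=1/p+(\tau_{jk}^2-1/p)$ off the diagonal. The first piece gives $\frac{h''(0)}{2p}(\one\one^\top-I)$, and the $-I/p$ part is negligible. The remaining matrix $E_{jk}=\tau_{jk}^2-1/p$ (zero on the diagonal) has entries of size $O(\log n/p)$, so Frobenius or Gershgorin bounds only give $O(\log n)$. A genuine cancellation argument is needed here.
\end{enumerate}

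\textbf{Plan for the quadratic term.} Expand
\[
\tau_{jk}^2=p^{-2}\sum_{a,b}x_{ja}x_{jb}x_{ka}x_{kb}.
\]
The terms with $a=b$ give $p^{-2}\sum_a x_{ja}^2x_{ka}^2$. Their deviation from $1/p$ is a rank-one-plus-small structure: writing $x_{ja}^2=1+(x_{ja}^2-1)$ produces a matrix of the form $p^{-2}\,\Xi\,\Xi^\top$ with $\Xi$ having centered entries. This has operator norm $O_{\Pp}(n/p^2\cdot p)$, and a more careful bound gives $O(p^{-1/2})$, so it vanishes. The terms with $a\neq b$ form a matrix $p^{-2}YY^\top$, where row $j$ of $Y$ is the vector $(x_{ja}x_{jb})_{a\neq b}$ of dimension about $p^2$, with its diagonal removed. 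The rows of $Y$ are independent, isotropic up to a constant, and have norm about $p$. A moment (trace) method then shows $\|p^{-2}YY^\top-\text{diag}\|_{\op}=O_{\Pp}(\sqrt n/p)=o_{\Pp}(1)$.

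As a first attempt I would bound $\E\Tr(E^{2m})$ for fixed large $m$ by a combinatorial count. This is exactly the step El Karoui carries out in his paper, so if it proves too laborious I would cite \cite[Theorem~2.1]{ElKaroui2010} for this term.

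\textbf{Conclusion.} Collecting terms, the coefficient of $I$ is
\[
h(1)-h(0)-h'(0)+o(1),
\]
and the $\one\one^\top$ coefficient is $h(0)+h''(0)/(2p)$, which gives \eqref{eq:El-Karoui-special}.
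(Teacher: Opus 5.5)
Your outline is correct, but it is not the paper's route. The paper gives no argument here. It specializes El Karoui's Theorem~2.1 to $\Sigma=I_p$, where $\Tr(\Sigma)/p=1$ and $\Tr(\Sigma^2)/(2p^2)=1/(2p)$, after checking that Gaussian entries meet his moment hypothesis and that $h$ is $C^1$ near $1$ and $C^3$ near $0$. You instead rerun El Karoui's Taylor-expansion scheme directly in the Gaussian case. That buys self-containment and transparency. The Gaussian bound $\max_{j\neq k}|\tau_{jk}|=O_{\Pp}(\sqrt{\log n/p})$ makes the diagonal, linear and cubic pieces elementary; El Karoui must control these under weaker moment assumptions and general $\Sigma$. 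Your route also shows exactly where $h''(0)/(2p)$ and $h(1)-h(0)-h'(0)$ come from, while the citation buys brevity and generality. As written, however, your only nontrivial step, the quadratic term, is asserted rather than proved. Your fallback of citing El Karoui for it is exactly the paper's route.

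If you want a genuinely self-contained proof, that step is cheaper than your plan suggests. A fourth-moment bound suffices, and no split into $a=b$ versus $a\neq b$ is needed. Take distinct $j,k,l,m$, condition on $x_j,x_l$, and use $\E[(g^\top u)^2(g^\top v)^2]=\norm u^2\norm v^2+2(u^\top v)^2$ for $g\sim\mathcal N(0,I_p)$. This gives $\E[E_{jk}E_{kl}\mid x_j,x_l]=p^{-2}(\alpha_j\alpha_l+2\tau_{jl}^2)$ with $\alpha_j=\norm{x_j}^2/p-1$. Hence each nondegenerate $4$-cycle satisfies $\E[E_{jk}E_{kl}E_{lm}E_{mj}]=p^{-4}\E(\alpha_j\alpha_l+2\tau_{jl}^2)^2=O(p^{-6})$. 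The $O(n^3)$ degenerate cycles contribute $O(p^{-4})$ each. So $\E\Tr(E^4)=O(p^{-1})$, and $\norm E_{\op}\leq(\Tr E^4)^{1/4}=O_{\Pp}(p^{-1/4})$.

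If you keep your own decomposition, fix three points:
\begin{itemize}
\item The $a=b$ piece also contains the rank-two term $p^{-2}(s\one^\top+\one s^\top)$ with $s_j=\sum_a(x_{ja}^2-1)$. It is harmless, with norm $O_{\Pp}(np^{-3/2})$, but it must be accounted for.
\item Over ordered pairs, the vectors $(x_{ja}x_{jb})_{a\neq b}$ have covariance $I+\Pi$, where $\Pi$ is the swap. Isotropy therefore requires unordered pairs.
\item A fixed-$m$ trace bound gives $O_{\Pp}(n^{1/(2m)}\sqrt n/p)$ rather than $O_{\Pp}(\sqrt n/p)$. This is still enough.
\end{itemize}
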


The substitution in our problem is
\[
  n=\ell,\qquad x_j=k_j,\qquad X=K,
  \qquad h(t)=e^{\beta^2t}-1.
\]
In particular,
\[
  h(0)=0,\quad h'(0)=\beta^2=b,\quad h''(0)=\beta^4,
  \quad h(1)-h(0)-h'(0)=a.
\]
All hypotheses of \cref{prop:El-Karoui} hold because the keys are Gaussian
and $\ell/p\to\rho\in(0,\infty)$.  Therefore
\begin{equation}
  \left\|M_K-\left(
  aI_\ell+b\frac{KK^\top}{p}
  +\frac{\beta^4}{2p}\one\one^\top
  \right)\right\|_{\op}
  \xrightarrow{\Pp}0.
  \label{eq:MK-approx}
\end{equation}

Let $U\in\R^{\ell\times(\ell-1)}$ have orthonormal columns spanning
$u^\perp$:
\begin{equation}
  U^\top U=I_{\ell-1},\qquad UU^\top=P,
  \qquad U^\top\one=0.
  \label{eq:U}
\end{equation}
The covariance after right compression is
\begin{equation}
  \widehat\Sigma_K:=U^\top\Sigma_KU.
  \label{eq:Sigma-hat}
\end{equation}
Uniform chi-square concentration gives
\begin{equation}
  \norm{D_K-I}_{\op}
  =O_{\Pp}\!\left(\sqrt{\frac{\log\ell}{p}}
                      +\frac{\log\ell}{p}\right)=o_{\Pp}(1).
  \label{eq:D-close}
\end{equation}
The Wishart norm bound shows that
$\norm{aI+bKK^\top/p}=O_{\Pp}(1)$, so conjugating this matrix by $D_K$
changes it by $o_{\Pp}(1)$ in operator norm.  The rank-one term in
\eqref{eq:MK-approx} requires a separate estimate:
\begin{align}
 \frac1p\norm{U^\top D_K\one\one^\top D_KU}
 &=\frac1p\norm{U^\top D_K\one}^2 \notag\\
 &=\frac1p\norm{U^\top(D_K-I)\one}^2
 \leq\frac\ell p\norm{D_K-I}^2=o_{\Pp}(1).
 \label{eq:projected-rank-one}
\end{align}
Combining \eqref{eq:Sigma-factorization}, \eqref{eq:MK-approx}, and
\eqref{eq:projected-rank-one} gives
\begin{equation}
 \left\|\widehat\Sigma_K-\left[
 aI_{\ell-1}+b\frac{(U^\top K)(U^\top K)^\top}{p}
 \right]\right\|_{\op}
 \xrightarrow{\Pp}0.
 \label{eq:Sigma-hat-approx}
\end{equation}
Rotational invariance implies that $U^\top K$ is an
$(\ell-1)\times p$ matrix of independent standard Gaussian entries.

\section{Conditional local law and upper spectral confinement}
\label{sec:local-law}

We now apply Fan--Ma--Paquette--Wang (FMPW)
\cite[Assumptions~1--2, Proposition~2.17(b), and
Corollary~2.7(a)]{FanMaPaquetteWang2026}.  We use only their
outside-support result, not the anisotropic part of their theorem.

In abbreviated form, their Assumptions~1--2 require independent centered
vectors $g_1,\ldots,g_N\in\R^n$ with common covariance $\Sigma$, comparable
$n,N$, uniformly bounded $\norm\Sigma_{\op}$, a positive fraction of the
population eigenvalues bounded away from zero, polynomial norm moments, and
the quadratic-form concentration
\begin{equation}
 \Pp\left(
 \left|g_i^\top Bg_i-\Tr(\Sigma B)\right|
 \geq n^\varepsilon\norm B_{\rm F}
 \right)\leq C_{\varepsilon,D}n^{-D}
 \label{eq:FMPW-assumption}
\end{equation}
for every $\varepsilon,D>0$ and deterministic $B$.  Under these assumptions,
their Corollary~2.7(a) confines every eigenvalue of
$N^{-1}\sum_i g_ig_i^\top$ to a fixed neighborhood of the support of its
finite-dimensional deformed Marchenko--Pastur law, with arbitrarily high
polynomial probability.

FMPW verify \eqref{eq:FMPW-assumption} for random features in their
Proposition~2.17(b).  The conditional substitutions are displayed below.

\begin{center}
\small
\begin{tabularx}{0.94\textwidth}{@{}p{0.40\textwidth}X@{}}
\toprule
FMPW random-feature notation & Conditional attention problem \\
\midrule
output dimension $n$ & $\ell$ before compression \\
latent dimension $d$ & $p$ \\
sample count $N$ & $\ell$ query rows \\
deterministic design $X$ & $K/\sqrt p\in\R^{\ell\times p}$ \\
latent vector $w$ & $q_i\sim\mathcal N(0,I_p)$ \\
coordinate activation $\sigma_j(x)$
& $e^{\beta x-\beta^2/2}-D_j$ \\
random feature $g_i$ & $\widetilde f_i$ \\
population covariance $\Sigma$ & $\Sigma_K=D_KM_KD_K$ \\
compressed feature & $h_i=U^\top\widetilde f_i\in\R^{\ell-1}$ \\
compressed sample covariance &
$\ell^{-1}Z^\top Z$, $Z=\widetilde FU$ \\
\bottomrule
\end{tabularx}
\end{center}

Indeed,
\begin{equation}
  \widetilde F_{ij}
  =\sigma_{j,K}\!\left((Kq_i/\sqrt p)_j\right),
  \qquad
  \sigma_{j,K}(x):=e^{\beta x-\beta^2/2}-D_j.
  \label{eq:conditional-activation}
\end{equation}
Its ordinary power series is
\begin{equation}
 \sigma_{j,K}(x)
 =(e^{-\beta^2/2}-D_j)
 +e^{-\beta^2/2}\sum_{r\geq1}\frac{\beta^r}{r!}x^r.
 \label{eq:activation-series}
\end{equation}
For every fixed exponent $\alpha\in(1/2,1)$,
\[
  \sup_{r\geq1}\frac{|\beta|^r}{(r!)^{1-\alpha}}<\infty,
\]
so the coefficients in \eqref{eq:activation-series} satisfy the
$(r!)^{-\alpha}$ envelope in FMPW Proposition~2.17(b).  Notice that this
proposition permits the scalar activation to depend on the output coordinate
$j$.  On an event whose probability tends to one,
\eqref{eq:D-close} bounds the constant coefficients and the Wishart norm
theorem bounds $\norm{K/\sqrt p}_{\op}$.

For completeness, write the El Karoui approximation as
\begin{equation}
 M_K=A_K^\circ+E_K,
 \qquad
 A_K^\circ:=aI_\ell+b\frac{KK^\top}{p}
       +\frac{\beta^4}{2p}\one\one^\top,
 \qquad \norm{E_K}_{\op}=o_{\Pp}(1).
 \label{eq:MK-error-form}
\end{equation}
On the event
\(
 \norm{E_K}\leq a/4
\)
and
\(
 \norm{D_K-I}\leq1/2
\), Weyl's inequality and $A_K^\circ\succeq aI$ give
\begin{equation}
 \lambda_{\min}(M_K)\geq\frac{3a}{4},
 \qquad
 \lambda_{\min}(\Sigma_K)\geq
 \lambda_{\min}(D_K)^2\lambda_{\min}(M_K)
 \geq\frac{3a}{16}.
 \label{eq:population-lower}
\end{equation}
If additionally $\norm{K/\sqrt p}\leq C_K$ and
$c_\rho\leq\ell/p\leq C_\rho$,
then \eqref{eq:MK-error-form} also gives a deterministic upper bound on
$\norm{M_K}$, hence on $\norm{\Sigma_K}$.  We have therefore exhibited
constants $0<c_{\beta,\rho}<C_{\beta,\rho}<\infty$ such that, on events of probability
tending to one,
\begin{equation}
  c_{\beta,\rho} I\preceq\Sigma_K\preceq C_{\beta,\rho} I.
  \label{eq:population-bounds}
\end{equation}
Compression gives the same bounds for
$\widehat\Sigma_K=U^\top\Sigma_KU$.

Proposition~2.17(b) is first applied to $\widetilde f_i$, before the
compression by $U^\top$.  Its quadratic-form estimate transfers to
$h_i=U^\top\widetilde f_i$: for every
$B\in\R^{(\ell-1)\times(\ell-1)}$,
\begin{align}
 h_i^\top Bh_i-\Tr(\widehat\Sigma_KB)
 &=\widetilde f_i^\top(UBU^\top)\widetilde f_i
   -\Tr(\Sigma_KUBU^\top),
 \label{eq:quadratic-transfer}\\
 \norm{UBU^\top}_{\rm F}&=\norm B_{\rm F}.
 \label{eq:Frobenius-transfer}
\end{align}
The polynomial norm-moment condition transfers as well.

Let $e_\ell(K)$ be the upper endpoint of the finite deformed
Marchenko--Pastur law associated with population covariance
$\widehat\Sigma_K$, output dimension $\ell-1$, and sample count $\ell$.
The conditional application can be stated uniformly as follows.

\begin{lemma}[Quenched upper confinement]
\label{lem:quenched}
There are $K$-measurable events $\mathcal G_\ell$ with
$\Pp(K\in\mathcal G_\ell)\to1$ such that, for every fixed $\varepsilon>0$,
\begin{equation}
 \sup_{K\in\mathcal G_\ell}
 \Pp_Q\!\left(
 \norm{\widetilde FU/\sqrt\ell}^2>e_\ell(K)+\varepsilon
 \ \middle|\ K\right)\longrightarrow0.
 \label{eq:quenched}
\end{equation}
\end{lemma}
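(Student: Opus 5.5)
The plan is to define $\mathcal G_\ell$ as the intersection of the finitely many deterministic conditions on $K$ used above, namely
\[
 \norm{K/\sqrt p}_{\op}\leq C_K,\qquad
 \norm{D_K-I}_{\op}\leq \tfrac12,\qquad
 \norm{E_K}_{\op}\leq \tfrac a4 .
\]
Here $C_K$ is a fixed constant exceeding $1+\sqrt{C_\rho}$. Each condition holds with probability tending to one: the first by the Wishart norm theorem, the second by \eqref{eq:D-close}, and the third by \eqref{eq:MK-error-form}. So $\Pp(K\in\mathcal G_\ell)\to1$. On $\mathcal G_\ell$ the population bounds \eqref{eq:population-bounds} hold for both $\Sigma_K$ and $\widehat\Sigma_K$, with the same constants $c_{\beta,\rho},C_{\beta,\rho}$ for every such $K$. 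Also, the constant coefficients $e^{-\beta^2/2}-D_j$ of the activations \eqref{eq:activation-series} are bounded uniformly in $j$ and $K$.

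Next, for fixed $K\in\mathcal G_\ell$, I would check FMPW's Assumptions~1--2 for the compressed features $h_i=U^\top\widetilde f_i$, $i=1,\dots,\ell$. These are conditionally i.i.d.\ and centered, with covariance $\widehat\Sigma_K$, dimension $\ell-1$, and sample count $\ell$.

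\begin{itemize}
\item The covariance is bounded above and below by \eqref{eq:population-bounds}, so all population eigenvalues are bounded away from zero.
\item The quadratic-form concentration \eqref{eq:FMPW-assumption} comes from Proposition~2.17(b) applied to $\widetilde f_i$ with design $K/\sqrt p$ and the coordinate-dependent activations $\sigma_{j,K}$. It transfers to $h_i$ through \eqref{eq:quadratic-transfer}--\eqref{eq:Frobenius-transfer}, since $UBU^\top$ is deterministic given $K$.
\item Polynomial norm moments follow because $\E\norm{h_i}^{2m}\leq\E\norm{\widetilde f_i}^{2m}$. This is bounded by $\ell^m$ times a constant, via Gaussian moments of $e^{\beta x}$ with variances $\norm{k_j}^2/p\le 2$ (on a slightly enlarged $\mathcal G_\ell$ enforcing $\max_j\norm{k_j}^2/p\leq 2$, which costs nothing).
\end{itemize}

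Corollary~2.7(a) then says that, for each $\varepsilon>0$ and $D>0$, all eigenvalues of $\ell^{-1}\sum_i h_ih_i^\top=\ell^{-1}(\widetilde FU)^\top\widetilde FU$ lie in the $\varepsilon$-neighborhood of the support of the finite deformed MP law, except with probability at most $C\ell^{-D}$. In particular the largest eigenvalue, $\norm{\widetilde FU/\sqrt\ell}^2$, is at most $e_\ell(K)+\varepsilon$ on that event.

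The main obstacle is \textbf{uniformity in $K$}. FMPW's constants $C_{\varepsilon,D}$ depend on the model only through the constants in their assumptions: the aspect-ratio bounds, the bounds on $\norm\Sigma$ and on the eigenvalue lower-bound fraction, the moment constants, and, in Proposition~2.17(b), the bound on $\norm{X}_{\op}$, the coefficient envelope, and the size of the constant coefficients. I will track this dependence explicitly and argue that each of these constants is fixed on $\mathcal G_\ell$ and independent of the particular $K$. The high-probability bound then holds with one constant $C_{\varepsilon,D}$ for all $K\in\mathcal G_\ell$. Taking the supremum and letting $\ell\to\infty$ gives \eqref{eq:quenched}.

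One subtlety needs attention. FMPW's Proposition~2.17(b) is stated with the latent vector and the design in a normalized form, and possibly with an activation centered with respect to the marginal law of each coordinate. Our $\sigma_{j,K}$ is centered exactly, because $D_j$ is the conditional mean by \eqref{eq:conditional-F-mean}. The coordinate variances $\norm{k_j}^2/p$ lie in $[1/2,2]$ on $\mathcal G_\ell$. If their proposition requires unit-norm rows of the design, I would absorb the factor $\norm{k_j}/\sqrt p$ into the coordinate activation, $x\mapsto\sigma_{j,K}(\norm{k_j}x/\sqrt p)$. The $(r!)^{-\alpha}$ envelope survives this rescaling with a constant enlarged by at most $\sqrt2^{\,r}$, which is again absorbed by the envelope.
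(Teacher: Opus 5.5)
Your proposal is correct and follows essentially the same route as the paper. The steps match: the same good set $\mathcal G_\ell$ (a design-norm bound, $\norm{D_K-I}_{\op}\le\tfrac12$ and $\norm{E_K}_{\op}\le a/4$, which give the uniform population bounds), the FMPW Proposition~2.17(b) quadratic-form estimate transferred to $h_i=U^\top\widetilde f_i$, and Corollary~2.7(a) applied with constants fixed on $\mathcal G_\ell$. The one difference is how uniformity in $K$ is obtained: the paper uses a subsequence/contradiction argument that applies FMPW to a deterministic triangular array $K_{\ell_m}\in\mathcal G_{\ell_m}$, instead of explicitly tracking FMPW's constants, but it rests on the same fact you invoke.
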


\begin{proof}
Fix deterministic constants
$C_K,c_\rho,C_\rho,c_{\beta,\rho},C_{\beta,\rho}$ for which the
following events have probability tending to one, and set
\begin{align}
 \mathcal G_\ell:=\bigl\{K:\;&
 \norm{K/\sqrt p}_{\op}\leq C_K,
 \quad c_\rho\leq\ell/p\leq C_\rho,
 \quad \norm{D_K-I}_{\op}\leq\tfrac12,
 \quad \norm{E_K}_{\op}\leq\tfrac a4,\notag\\[-2mm]
 &c_{\beta,\rho} I\preceq\Sigma_K\preceq C_{\beta,\rho} I,
 \quad
 c_{\beta,\rho} I\preceq\widehat\Sigma_K\preceq C_{\beta,\rho} I
 \bigr\}.
 \label{eq:good-key-set}
\end{align}
The Wishart norm theorem, \eqref{eq:D-close},
\eqref{eq:MK-error-form}, and \eqref{eq:population-lower} show that
$\Pp(K\in\mathcal G_\ell)\to1$.  On every such deterministic key matrix,
the design norm, the coefficient envelope in
\eqref{eq:activation-series}, the aspect ratios, and the covariance bounds
in FMPW Assumptions~1--2 are controlled by constants independent of $\ell$.

We spell out why this gives the supremum in \eqref{eq:quenched}.  If that
supremum did not tend to zero, then for some $c>0$ there would be a
subsequence $\ell_m$ and deterministic matrices
$K_{\ell_m}\in\mathcal G_{\ell_m}$ for which the displayed conditional
failure probability is at least $c$.  These matrices, their coordinatewise
activations, and their compressed covariances form a deterministic triangular
array satisfying the hypotheses of FMPW Proposition~2.17(b) and
Corollary~2.7(a) with fixed constants.  Applying those results to this array
forces the same failure probabilities to tend to zero, a contradiction.
This proves \eqref{eq:quenched}.
\end{proof}

Set
\begin{equation}
  Z:=\widetilde FU\in\R^{\ell\times(\ell-1)}.
  \label{eq:Z}
\end{equation}
The local law applies to $\ell^{-1}Z^\top Z$.  The left projection $P$ is
inserted only afterward, because left multiplication mixes the conditionally
independent samples.  It can only decrease the Gram matrix:
\begin{equation}
  \frac1\ell Z^\top PZ
  =\frac1\ell Z^\top Z
  -\left(\frac{Z^\top u}{\sqrt\ell}\right)
   \left(\frac{Z^\top u}{\sqrt\ell}\right)^\top
  \preceq\frac1\ell Z^\top Z.
  \label{eq:left-projection}
\end{equation}
Moreover,
\begin{equation}
  \norm{P\widetilde FP/\sqrt\ell}^2
  =\lambda_{\max}\!\left(\frac1\ell Z^\top PZ\right).
  \label{eq:compressed-Gram}
\end{equation}
Thus, for each fixed $\varepsilon>0$,
\begin{equation}
  \Pp\!\left(
  \norm{P\widetilde FP/\sqrt\ell}^2
  >e_\ell(K)+\varepsilon\right)\longrightarrow0.
  \label{eq:conditional-upper-before-edge}
\end{equation}

\section{Convergence of the finite conditional edge}
\label{sec:finite-edge}

This section supplies the link between the finite conditional law in FMPW
and the deterministic HCK edge.

\begin{lemma}[Conditional edge convergence]
\label{lem:finite-edge}
With $e_\ell(K)$ defined above,
\begin{equation}
  e_\ell(K)\xrightarrow{\Pp}
  E_+(\beta,\rho).
  \label{eq:finite-edge-convergence}
\end{equation}
\end{lemma}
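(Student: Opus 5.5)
The plan is to express $e_\ell(K)$ through the finite Silverstein map and to show that this map converges, together with its derivatives, to $\Phi_{\beta,\rho}$ on compact subsets of the physical interval. Let $\widehat H_\ell:=\ESD(\widehat\Sigma_K)$ be the empirical law of the $\ell-1$ population eigenvalues, and $c_\ell:=(\ell-1)/\ell\to1$ the dimension-to-sample ratio. The Silverstein--Choi characterization gives the finite edge as
\[
  e_\ell(K)=\max_{0<q<1/\lambda_{\max}(\widehat\Sigma_K)}\!\Bigl(-\Phi_\ell(q)\Bigr)^{\!-}\!,
\]
or more concretely $e_\ell(K)=\Phi_\ell(q_\ell)$, where
\[
  \Phi_\ell(q)=\frac1q+c_\ell\int\frac{t}{1-qt}\,d\widehat H_\ell(t),
\]
and $q_\ell\in(0,\lambda_{\max}(\widehat\Sigma_K)^{-1})$ is the unique critical point of $\Phi_\ell$ on that interval. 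Existence and uniqueness of $q_\ell$ follow from strict convexity as in \eqref{eq:Phi-second}, and because the atom at $\lambda_{\max}$ makes $\Phi_\ell'\to+\infty$ at the right end. The same formula holds in the limit with $c=1$, matching \eqref{eq:Phi} and \eqref{eq:E-Phi}.

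\textbf{Step 1: convergence of the population data.} By \eqref{eq:Sigma-hat-approx}, $\widehat\Sigma_K$ is within $o_{\Pp}(1)$ in operator norm of $T':=aI+bGG^\top/p$, where $G=U^\top K$ has Gaussian entries. The Marchenko--Pastur theorem gives $\ESD(T')\Rightarrow H_{\beta,\rho}$, and Bai--Yin gives $\lambda_{\max}(T')\to L_{\beta,\rho}$ in probability. Weyl's inequality then transfers both facts to $\widehat H_\ell$ and $\lambda_{\max}(\widehat\Sigma_K)$. The lower bound in \eqref{eq:population-bounds} keeps all supports in a fixed compact subset of $(0,\infty)$.

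\textbf{Step 2: locating $q_\ell$ and passing to the limit.} Fix $\delta>0$ small and let $J_\delta=[\delta,L_{\beta,\rho}^{-1}-\delta]$, which contains $q_*$ for $\delta$ small. For $q\in J_\delta$ the integrands $t/(1-qt)$ and $t^2/(1-qt)^2$ are bounded and continuous on a neighborhood of the supports, with high probability, by Step 1. Hence $\Phi_\ell$ and $\Phi_\ell'$ converge uniformly on $J_\delta$ to $\Phi_{\beta,\rho}$ and $\Phi_{\beta,\rho}'$. Since $\Phi_{\beta,\rho}'$ is strictly increasing with a simple zero at $q_*$, the signs $\Phi_\ell'(q_*-\eta)<0<\Phi_\ell'(q_*+\eta)$ hold with high probability. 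Convexity then forces $q_\ell\in(q_*-\eta,q_*+\eta)$, and we conclude $e_\ell(K)=\Phi_\ell(q_\ell)\to\Phi_{\beta,\rho}(q_*)=E_+(\beta,\rho)$ in probability.

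\textbf{Main obstacle.} The delicate step is showing that $q_\ell$ does not escape toward $\lambda_{\max}(\widehat\Sigma_K)^{-1}$. This would happen if finitely many population outliers above $L_{\beta,\rho}$ existed, or more generally if the finite edge were governed by the atoms near the top rather than by the bulk. Uniform convergence on $J_\delta$ rules this out only once we know that the right endpoint $\lambda_{\max}(\widehat\Sigma_K)^{-1}$ converges to $L_{\beta,\rho}^{-1}$. This is exactly where the operator-norm (rather than merely weak) approximation \eqref{eq:Sigma-hat-approx} and the Bai--Yin edge are essential. Given that, convexity of $\Phi_\ell$ and $\Phi_\ell'(q_*+\eta)>0$ imply $\Phi_\ell'>0$ on $[q_*+\eta,\lambda_{\max}^{-1})$, which pins $q_\ell$ down.

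A secondary check is that the limit $\Phi_{\beta,\rho}'$ is genuinely positive near $L_{\beta,\rho}^{-1}$. This holds by the divergence argument stated before \eqref{eq:q-star}, and it guarantees that the limiting edge is regular, not a degenerate boundary case.
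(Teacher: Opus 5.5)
Your proposal is correct and follows essentially the same route as the paper. Both use the finite Silverstein map $\Phi_\ell$ with the $1/\ell$ normalization and the Silverstein--Choi identification $e_\ell(K)=\Phi_\ell(q_\ell)$. Both transfer the ESD and top-eigenvalue limits through the operator-norm approximation \eqref{eq:Sigma-hat-approx}, prove uniform convergence of $\Phi_\ell,\Phi_\ell'$ on compacts, and use a sign bracket around $q_*$ inside $(0,\lambda_{\max}(\widehat\Sigma_K)^{-1})$ that convexity turns into $q_\ell\to q_*$.

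The only flaw is your first displayed formula for $e_\ell(K)$, which is garbled. The correct statement, which you then actually use, is $e_\ell(K)=\min_{0<q<\lambda_{\max}(\widehat\Sigma_K)^{-1}}\Phi_\ell(q)=\Phi_\ell(q_\ell)$.
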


\begin{proof}
Let
$\lambda_{1,\ell}\geq\cdots\geq\lambda_{\ell-1,\ell}\geq0$
be the eigenvalues of $\widehat\Sigma_K$ and define
\begin{equation}
  \Phi_\ell(q):=\frac1q+\frac1\ell
  \sum_{j=1}^{\ell-1}
  \frac{\lambda_{j,\ell}}{1-q\lambda_{j,\ell}},
  \qquad 0<q<\lambda_{1,\ell}^{-1}.
  \label{eq:finite-Phi}
\end{equation}
The coefficient is $1/\ell$, rather than $1/(\ell-1)$, because $\ell$ is
the conditional sample count.  In the notation of the deterministic law in
FMPW, $\Phi_\ell(q)=z_{0,\ell}(-q)$.  On the high-probability event
\eqref{eq:population-bounds}, the covariance is positive definite.  Direct
differentiation yields
\begin{align}
 \Phi_\ell'(q)
 &=-\frac1{q^2}+\frac1\ell\sum_{j=1}^{\ell-1}
   \frac{\lambda_{j,\ell}^2}{(1-q\lambda_{j,\ell})^2},
 \label{eq:finite-Phi-prime}\\
 \Phi_\ell''(q)
 &=\frac2{q^3}+\frac2\ell\sum_{j=1}^{\ell-1}
   \frac{\lambda_{j,\ell}^3}{(1-q\lambda_{j,\ell})^3}>0.
 \label{eq:finite-Phi-second}
\end{align}
Therefore there is a unique $q_\ell$ with
$\Phi_\ell'(q_\ell)=0$, and the Silverstein--Choi support
characterization gives
\begin{equation}
  e_\ell(K)=\Phi_\ell(q_\ell).
  \label{eq:finite-edge-Phi}
\end{equation}

By \eqref{eq:Sigma-hat-approx}, the Marchenko--Pastur theorem, and the
Wishart upper-edge theorem~\cite{MarchenkoPastur1967,YinBaiKrishnaiah1988},
\begin{align}
  \frac1{\ell-1}\sum_{j=1}^{\ell-1}
  \delta_{\lambda_{j,\ell}}
  &\Longrightarrow H_{\beta,\rho},
  \label{eq:population-ESD-convergence}\\
  \lambda_{1,\ell}&\xrightarrow{\Pp}L_{\beta,\rho}.
  \label{eq:population-top-convergence}
\end{align}
Both convergences are in probability.  It follows that on every compact
$I\Subset(0,L_{\beta,\rho}^{-1})$,
\begin{equation}
  \Phi_\ell\longrightarrow\Phi_{\beta,\rho},
  \qquad
  \Phi_\ell'\longrightarrow\Phi_{\beta,\rho}'
  \quad\text{uniformly on $I$, in probability}.
  \label{eq:local-uniform-Phi}
\end{equation}
Indeed, the normalization difference contributes only the factor
$(\ell-1)/\ell\to1$.  Moreover, on a high-probability event all population
eigenvalues lie in one fixed compact interval, and the rational functions in
\eqref{eq:finite-Phi} and \eqref{eq:finite-Phi-prime} are uniformly bounded
and equicontinuous on $I$.

Choose $q_-<q_*<q_+$ with
$[q_-,q_+]\Subset(0,L_{\beta,\rho}^{-1})$ and
\[
  \Phi_{\beta,\rho}'(q_-)<0<
  \Phi_{\beta,\rho}'(q_+).
\]
By \eqref{eq:local-uniform-Phi}, with probability tending to one the same
signs hold for $\Phi_\ell'$.  Equation
\eqref{eq:population-top-convergence} also gives
$q_+<\lambda_{1,\ell}^{-1}$ with probability tending to one, so the entire
bracket lies in the domain of $\Phi_\ell$.  Its strict monotonicity then forces
$q_-<q_\ell<q_+$.  Shrinking the bracket proves $q_\ell\to q_*$ in
probability, and another use of \eqref{eq:local-uniform-Phi} yields
\[
  e_\ell(K)=\Phi_\ell(q_\ell)
  \xrightarrow{\Pp}\Phi_{\beta,\rho}(q_*)
  =E_+(\beta,\rho).
\]
\end{proof}

Combining \eqref{eq:conditional-upper-before-edge} and
\cref{lem:finite-edge} gives
\begin{equation}
  \forall\varepsilon>0,\qquad
  \Pp\!\left(
  \norm{P\widetilde FP/\sqrt\ell}^2>E_++\varepsilon
  \right)\longrightarrow0.
  \label{eq:nonlinear-upper}
\end{equation}

\section{Completion of the proof}
\label{sec:completion}

\subsection{Removing the row normalizer}

We first verify \eqref{eq:F-tightness-needed}.  From
\cref{lem:quenched,lem:finite-edge},
\begin{equation}
  \norm{\widetilde FU/\sqrt\ell}=O_{\Pp}(1).
  \label{eq:Ftilde-tight}
\end{equation}
It remains to control $m$.  If $Y=\norm{k_j}^2\sim\chi_p^2$, then
\[
  D_j=e^{-\beta^2/2}e^{\beta^2Y/(2p)}.
\]
The chi-square moment-generating formula gives
\begin{align}
  \E[D_j]
  &=e^{-\beta^2/2}
    \left(1-\frac{\beta^2}{p}\right)^{-p/2},
  \label{eq:ED}\\
  \E[D_j^2]
  &=e^{-\beta^2}
    \left(1-\frac{2\beta^2}{p}\right)^{-p/2}.
  \label{eq:ED2}
\end{align}
Using $-\log(1-x)=x+x^2/2+O(x^3)$,
\begin{align}
  \E[D_j]&=1+\frac{\beta^4}{4p}+O(p^{-2}),
  \label{eq:ED-expansion}\\
  \E[D_j^2]&=1+\frac{\beta^4}{p}+O(p^{-2}).
  \label{eq:ED2-expansion}
\end{align}
Thus
\begin{equation}
  \E\norm m^2
  =\ell\E(D_j-1)^2
  =\frac\ell p\frac{\beta^4}{2}+o(1)
  \longrightarrow\frac{\rho\beta^4}{2},
  \label{eq:m-second-moment}
\end{equation}
and therefore $\norm m=O_{\Pp}(1)$.  Since
$FP=\one m^\top P+\widetilde FP$ and $P=UU^\top$,
\begin{align}
  \norm{FP/\sqrt\ell}
  &\leq\norm{\one m^\top P/\sqrt\ell}
        +\norm{\widetilde FUU^\top/\sqrt\ell} \notag\\
  &=\norm{Pm}+\norm{\widetilde FU/\sqrt\ell}
  =O_{\Pp}(1).
  \label{eq:FP-tight}
\end{align}

Equations \eqref{eq:C-exact}, \eqref{eq:inverse-bound}, and
\eqref{eq:FP-tight} now imply
\begin{align}
 \norm{C_\ell-PFP/\sqrt\ell}
 &\leq2\norm R\norm{FP/\sqrt\ell}=o_{\Pp}(1).
 \label{eq:normalizer-removed}
\end{align}
Using \eqref{eq:mean-killed},
\begin{equation}
  C_\ell=P\widetilde FP/\sqrt\ell+o_{\Pp}(1)
  \quad\text{in operator norm}.
  \label{eq:C-surrogate}
\end{equation}
The right side before the error is operator-norm tight by
\eqref{eq:nonlinear-upper}.  Hence
\(
 |\norm{C_\ell}^2-\norm{P\widetilde FP/\sqrt\ell}^2|=o_{\Pp}(1)
\), and \eqref{eq:nonlinear-upper}, used with $\varepsilon/2$, gives
\begin{equation}
  \forall\varepsilon>0,\qquad
  \Pp\bigl(\norm{C_\ell}^2>E_++\varepsilon\bigr)
  \longrightarrow0.
  \label{eq:C-upper}
\end{equation}

\subsection{The bulk supplies the matching lower bounds}

Set
\begin{equation}
  B_\ell:=\sqrt\ell(A-uu^\top)=\sqrt\ell AP.
  \label{eq:B}
\end{equation}
HCK Theorem~3.2 gives
\begin{equation}
  \nu_{B_\ell}\xrightarrow{\text{moments, a.s.}}\nu_\infty.
  \label{eq:HCK-global}
\end{equation}
Since $C_\ell=PB_\ell$,
\begin{equation}
  B_\ell^\top B_\ell-C_\ell^\top C_\ell
  =B_\ell^\top uu^\top B_\ell
  \label{eq:rank-one-Gram}
\end{equation}
is positive semidefinite of rank at most one.  The rank inequality for
empirical spectral distributions~\cite[Theorem~A.43]{BaiSilverstein2010}
therefore implies
\begin{equation}
  \nu_{C_\ell}\Longrightarrow\nu_\infty
  \qquad\text{almost surely}.
  \label{eq:C-global}
\end{equation}

Let
\(
  \mu_{1,\ell}\geq\cdots\geq\mu_{\ell-1,\ell}\geq0
\)
be the eigenvalues of the restriction of
$C_\ell C_\ell^\top$ to $u^\perp$.  The upper bound
\eqref{eq:C-upper} applies simultaneously to every $\mu_{j,\ell}$.  For the
lower bound, fix $\varepsilon>0$.  Since $E_+=\max\supp\nu_\infty$,
\begin{equation}
  \nu_\infty((E_+-\varepsilon,\infty))>0.
  \label{eq:edge-positive-mass}
\end{equation}
Weak convergence \eqref{eq:C-global} and the Portmanteau theorem imply that,
almost surely,
\begin{equation}
 \liminf_{\ell\to\infty}\frac1\ell
 \#\{j:\mu_{j,\ell}>E_+-\varepsilon\}
 \geq\nu_\infty((E_+-\varepsilon,\infty))>0.
 \label{eq:positive-edge-count}
\end{equation}
Consequently, for every deterministic $r_\ell=o(\ell)$,
\begin{equation}
 \Pp\bigl(\mu_{r_\ell,\ell}>E_+-\varepsilon\bigr)\longrightarrow1.
 \label{eq:sublinear-lower}
\end{equation}
Together with \eqref{eq:C-upper}, this proves
\begin{equation}
 \max_{1\leq j\leq r_\ell}
 \abs{\mu_{j,\ell}-E_+}\xrightarrow{\Pp}0.
 \label{eq:all-sublinear-C-edges}
\end{equation}
In particular, for every fixed $j\geq1$,
\begin{equation}
  \mu_{j,\ell}\xrightarrow{\Pp}E_+.
  \label{eq:all-fixed-C-edges}
\end{equation}

\subsection{Cauchy interlacing returns to the singular values of
\texorpdfstring{$A$}{A}}

In the orthogonal basis $[u,U]$, the identity $Au=u$ gives
\begin{equation}
 [u,U]^\top(\sqrt\ell A)[u,U]
 =\begin{pmatrix}
   \sqrt\ell & \zeta_\ell^\top\\
   0 & \widehat C_\ell
  \end{pmatrix},
  \qquad
  \widehat C_\ell:=U^\top C_\ell U.
  \label{eq:block-A}
\end{equation}
The lower-right principal block of
$\ell AA^\top$ in this basis is
$\widehat C_\ell\widehat C_\ell^\top$, whose eigenvalues are
$\mu_{1,\ell},\ldots,\mu_{\ell-1,\ell}$.  Cauchy interlacing therefore gives,
for $2\leq k\leq\ell-1$,
\begin{equation}
  \mu_{k,\ell}
  \leq \ell s_k(A)^2
  \leq \mu_{k-1,\ell}.
  \label{eq:final-interlacing}
\end{equation}
For fixed $k\geq2$, both endpoints converge in probability to $E_+$ by
\eqref{eq:all-fixed-C-edges}.  More generally, if $2\leq k\leq r_\ell$,
then \eqref{eq:final-interlacing} and the ordering of the $\mu_{j,\ell}$ give
\[
 \mu_{r_\ell,\ell}
 \leq \ell s_k(A)^2\leq\mu_{1,\ell}.
\]
Equation \eqref{eq:all-sublinear-C-edges} proves
\eqref{eq:uniform-main-theorem} and completes the proof of
\cref{thm:main}.

\section{Square model and branch selection}
\label{sec:square}

Take $\ell=p=d$ and $X=I_d$, so $\rho=1$.  HCK define
\begin{equation}
  \Kbeta(w):=\frac1w+aw+\frac{bw}{1-bw^2},
  \qquad 0<w<b^{-1/2},
  \label{eq:K-beta}
\end{equation}
and show \cite[Appendix~C, Eqs.~(C.2)--(C.3) and
Remark~C.2]{HayaseCollinsKarakida2026} that their squared upper bulk edge is
\begin{equation}
  E_+(\beta)=\Kbeta(w_*)^2,
  \qquad \Kbeta'(w_*)=0.
  \label{eq:HCK-square-edge}
\end{equation}
We now connect this formula to \eqref{eq:E-Phi} and make its branch choice
explicit.

For $\rho=1$ and $0<y:=bw^2<1/2$, put
\begin{equation}
  \eta=\frac{y}{1-y}.
  \label{eq:y-eta}
\end{equation}
The transformations in \eqref{eq:z-eta} and \eqref{eq:q-eta} become
\begin{equation}
  z=\frac{\eta}{(1+\eta)^2}=y(1-y),
  \qquad
  q(\eta)=\frac{w}{\Kbeta(w)}.
  \label{eq:square-transform}
\end{equation}
Because $y<1/2$, this is the physical branch of the moment transform.
Direct substitution in \eqref{eq:Phi-eta} gives
\begin{equation}
  \Phi_{\beta,1}\!\left(\frac{w}{\Kbeta(w)}\right)
  =\Kbeta(w)^2.
  \label{eq:square-Phi-K}
\end{equation}

To see which algebraic branch is used, write $\kappa=a/b$.  The equation
$\Kbeta'(w_*)=0$ is equivalent to
\begin{equation}
  \kappa y_*^3-2\kappa y_*^2+(\kappa+3)y_*-1=0,
  \qquad y_*=bw_*^2.
  \label{eq:y-cubic}
\end{equation}
The left side equals $-1$ at $y=0$ and $4\kappa/27>0$ at $y=1/3$.
Since $\Kbeta$ is strictly convex, its critical point is unique, and hence
\begin{equation}
  0<y_*<\frac13<\frac12.
  \label{eq:y-branch-bound}
\end{equation}
For $\rho=1$,
\begin{equation}
  M_1(z)=\frac{1-\sqrt{1-4z}}{2z},
  \qquad M_1(0)=1.
  \label{eq:M1}
\end{equation}
At $z=y_*(1-y_*)$, inequality \eqref{eq:y-branch-bound} forces
\[
  \sqrt{1-4y_*(1-y_*)}=1-2y_*,
\]
and therefore
\begin{equation}
  M_1(y_*(1-y_*))=\frac1{1-y_*}.
  \label{eq:physical-branch}
\end{equation}
The alternative value $1/y_*$ is the other algebraic branch and is
incompatible with $M_1(0)=1$.  This is precisely why $y_*$ selects the HCK
branch in \eqref{eq:HCK-square-edge}.

\begin{corollary}[Square Gaussian attention]
\label{cor:square}
If $\ell=p=d$, $X=I_d$, and $\beta>0$ is fixed, then for every fixed
$k\geq2$,
\begin{equation}
  d s_k(A)^2\xrightarrow{\Pp}\Kbeta(w_*)^2.
  \label{eq:square-corollary}
\end{equation}
In particular,
\begin{equation}
  d s_2(A)^2\xrightarrow{\Pp}
  \max\supp\nu_\infty.
  \label{eq:s2-corollary}
\end{equation}
\end{corollary}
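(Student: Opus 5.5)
The corollary is the specialization of \cref{thm:main} to $\ell=p=d$, $X=I_d$, so the plan is to verify the hypotheses, convert the scale, and identify the edge. With $X=I_d$ we have $XX^\top=I_\ell$ and $\ell=d$, hence $\gamma=1$. Also $p=d$ gives $\psi=1$, and therefore $\rho=\gamma/\psi=1$. \Cref{thm:main} then gives $\ell s_k(A)^2\to E_+(\beta,1)=\max\supp\nu_\infty$ in probability for every fixed $k\geq2$. Since $\ell=d$ exactly, the $d$-scale remark requires no limiting argument: $d s_k(A)^2=\ell s_k(A)^2$. This already proves \eqref{eq:s2-corollary}, which is the case $k=2$.

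It remains to show $E_+(\beta,1)=\Kbeta(w_*)^2$, and I would do this through \eqref{eq:E-Phi}. By \eqref{eq:square-Phi-K}, the map $w\mapsto q=w/\Kbeta(w)$ carries $\Kbeta^2$ onto $\Phi_{\beta,1}$ along the physical branch. I would argue that this map is a smooth increasing bijection from $(0,w_{\max})$ onto an interval in $(0,L_{\beta,1}^{-1})$ that contains $q_*$, where $w_{\max}=(2b)^{-1/2}$ corresponds to $y=1/2$. Concretely, I would compose \eqref{eq:y-eta}, under which $\eta$ increases in $y$ and $\eta<1=\rho^{-1/2}$ exactly when $y<1/2$, with \eqref{eq:q-eta-derivative}, under which $q$ increases in $\eta$.

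Given that bijection, the chain rule yields
\[
2\Kbeta(w)\Kbeta'(w)=\Phi_{\beta,1}'(q(w))\,q'(w).
\]
Both $q'(w)>0$ and $\Kbeta(w)>0$ hold, so the critical points correspond exactly. By \eqref{eq:y-branch-bound}, the unique critical point $w_*$ of $\Kbeta$ lies in the physical range $y_*<1/3$, so it maps to the unique critical point $q_*$ of the strictly convex $\Phi_{\beta,1}$. Hence
\[
E_+(\beta,1)=\Phi_{\beta,1}(q_*)=\Kbeta(w_*)^2.
\]
As a cross-check, I would confirm this against \cref{prop:scalar-edge} at $\rho=1$: substituting $\eta=y/(1-y)$ into the cubic \eqref{eq:eta-cubic} should reproduce \eqref{eq:y-cubic} after clearing the factor $(1-y)^{-3}$.

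The main obstacle is the monotonicity and branch bookkeeping. I need to check that $w\mapsto w/\Kbeta(w)$ truly agrees with $q(\eta(y(w)))$ on the whole interval $0<y<1/2$ and not just near $w_*$, so that no spurious critical point of $\Kbeta$ on the nonphysical branch $y>1/2$ (which lies inside $w<b^{-1/2}$) can be selected. The bound $y_*<1/3$ from \eqref{eq:y-branch-bound}, together with uniqueness from strict convexity of $\Kbeta$, rules this out. That is exactly the role of \eqref{eq:physical-branch}.
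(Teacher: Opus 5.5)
Your proposal is correct and follows essentially the paper's route: specialize \cref{thm:main} to $\gamma=\psi=\rho=1$ (with $d=\ell$ exactly), then identify $E_+(\beta,1)$ with $\Kbeta(w_*)^2$ via the substitution $\eta=y/(1-y)$, the identity \eqref{eq:square-Phi-K}, and the branch bound $y_*<1/3$ together with uniqueness of the critical point of the strictly convex $\Kbeta$. Your chain-rule identity $2\Kbeta(w)\Kbeta'(w)=\Phi_{\beta,1}'(q(w))\,q'(w)$, with $q'>0$ from the monotone substitutions, makes explicit the correspondence $w_*\mapsto q_*$ that the paper leaves implicit; the paper also cites HCK's own Appendix~C formula for this identification.
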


Numerical values for $\beta=1$, together with bounds on their numerical
error, are given in Appendix~\ref{app:numerical-edge}.

\appendix
\section{Numerical evaluation of the square-model edge}
\label{app:numerical-edge}

For $\beta=1$, we have $a=e-2$ and $b=1$.  The exact limiting constants in
Corollary~\ref{cor:square} are determined by the unique root
$y_*\in(0,1/3)$ of
\[
  (e-2)y(1-y)^2+3y-1=0,
\]
and by
\[
  E_+(1)=\frac{\bigl(1+(e-2)y_*+y_*/(1-y_*)\bigr)^2}{y_*}.
\]
Their numerical values, rounded to six decimal places, are
\[
  E_+(1)\approx9.009543,
  \qquad \sqrt{E_+(1)}\approx3.001590.
\]
Each displayed approximation has absolute error less than $5\times10^{-7}$.
These are deterministic numerical error bounds for the limiting constants;
they do not quantify finite-dimensional fluctuations of the random singular
values.

The error bounds can be checked using rational arithmetic.  Set
\[
  e_-:=\sum_{j=0}^{30}\frac1{j!},
  \qquad e_+:=e_-+\frac1{30\cdot30!},
\]
so that $e_-<e<e_+$.  Bisection, with the polynomial evaluated using these
bounds, brackets $y_*$ between
\[
  y_-:=\frac{298169093901984}{10^{15}},
  \qquad y_+:=\frac{298169093901985}{10^{15}}.
\]
Indeed, $(e_+-2)y_-(1-y_-)^2+3y_--1<0$ and
$(e_--2)y_+(1-y_+)^2+3y_+-1>0$.
Writing
\[
  B_\pm:=1+(e_\pm-2)y_\pm+\frac{y_\pm}{1-y_\pm},
\]
monotonicity of $1+(e-2)y+y/(1-y)$ in both $e$ and $y$ gives
\[
  \frac{B_-^2}{y_+}<E_+(1)<\frac{B_+^2}{y_-}.
\]
Comparing these rational bounds, and their positive square roots, with the
displayed decimal approximations gives the stated error bounds.

\section*{Acknowledgements}

The author thanks Tomohiro Hayase for helpful comments on the presentation.

ChatGPT Sol5.6 were used for calculations, proof ideas, and editorial
assistance. Some of their suggestions were helpful, while others were
misleading. The authors independently verified all computations and take full
responsibility for the contents of this paper.

\begingroup
\raggedright

\endgroup

\end{document}